\newtheorem{theorem}{Theorem}[section]
\newtheorem{proposition}{Proposition}[section]
\newtheorem{lemma}{Lemma}[section]
\newtheorem{remark}{Remark}[section]
\newtheorem{assumption}{Assumption}[section]
\numberwithin{equation}{section}
\newif\ifcomment \commentfalse
\newcommand{\remove}[1]{}
\newenvironment{vardesc]}[1]{%
\settowidth{\parindent}{#1: \ }
\makebox{#1:}}{}
\begin{document} 

\begin{frontmatter}

\title{Convergence of a vector BGK approximation \\ for the incompressible Navier-Stokes equations}
\author[Bianchini]{Roberta Bianchini}


\ead{bianchin@mat.uniroma2.it}
\fntext[Roberta Bianchini]{Dipartimento di Matematica, Universit\`a degli Studi di Roma "Tor Vergata", via della Ricerca Scientifica 1, I-00133 Rome, Italy - Istituto per le Applicazioni del Calcolo "M. Picone", Consiglio Nazionale delle Ricerche, via dei Taurini 19, I-00185 Rome, Italy. }

\author[Roberto Natalini]{Roberto Natalini}
\ead{roberto.natalini@cnr.it}
\fntext[Natalini]{Istituto per le Applicazioni del Calcolo "M. Picone", Consiglio Nazionale delle Ricerche, via dei Taurini 19, I-00185 Rome, Italy.}

\begin{abstract}
We present a rigorous convergence result for the smooth solutions to a singular semilinear hyperbolic approximation, called vector BGK model, to the solutions to the incompressible Navier-Stokes equations in Sobolev spaces. Our proof is based on the use of a constant right symmetrizer, weighted with respect to the parameter of the singular pertubation system. This symmetrizer provides a conservative-dissipative form for the system and this allow us to perform uniform energy estimates and to get the convergence by compactness.
\end{abstract}

\begin{keyword} 
vector BGK system \sep incompressible Navier-Stokes equations \sep symmetrizer \sep conservative-dissipative form.
\end{keyword} 

\end{frontmatter}
\section{Introduction}
We want to study the convergence of a singular perturbation approximation to the Cauchy problem for the incompressible Navier-Stokes equations on the $d$ dimensional torus $\mathbb{T}^d$:
\begin{equation}
\label{real_NS}
\begin{cases}
& \partial_{t}\textbf{u}^{NS}+\nabla \cdot (\textbf{u}^{NS} \otimes \textbf{u}^{NS}) + \nabla P^{NS}={\nu} \Delta \textbf{u}^{NS}, \\
& \nabla \cdot \textbf{u}^{NS}=0, \\
\end{cases}
\end{equation}
with $(t, x) \in [0, +\infty) \times \mathbb{T}^d,$ and initial data

\begin{equation}
\label{real_NS_initial_data}
\textbf{u}^{NS}(0,x)=\textbf{u}_{0}(x), \,\,\,\,\, \quad \,\,\,\,\, \nabla \cdot \textbf{u}_0=0.
\end{equation}

Here $\textbf{u}^{NS}$ and $\nabla P^{NS}$ are respectively the velocity field and the gradient of the pressure term, and $\nu>0$ is the viscosity coefficient. 

We consider a semilinear hyperbolic approximation, called \emph{vector BGK model}, \cite{CN, VBouchut}, to the incompressible Navier-Stokes equations (\ref{real_NS}). The general form of this approximation is as follows:

\begin{equation}
\label{BGK_generic}
\partial_{t}f_{l}^\varepsilon + \frac{\lambda_{l}}{\varepsilon} \cdot \nabla_{x}  f_{l}^\varepsilon=\frac{1}{\tau \varepsilon^{2}}(M_{l}(\rho^\varepsilon, \varepsilon \rho^\varepsilon \textbf{u}^\varepsilon)-f_{l}^\varepsilon), 
\end{equation}

with initial data 

\begin{equation}
\label{initial_conditions_BGK}
f_{l}^\varepsilon(0,x)=\bar{M}_l^\varepsilon (\bar{\rho}, \varepsilon \bar{\rho}\textbf{u}_{0}) =M_{l}^\varepsilon(\bar{\rho}, \varepsilon \bar{\rho}\textbf{u}_{0}) + \varepsilon g(\nabla\textbf{u}_0), ~~~ \textbf{u}_0 \,\,\, \text{in} \,\,\, (\ref{real_NS_initial_data}), \,\,\,\,\, l=1, \cdots, L,
\end{equation}

where $f_l^\varepsilon$ and  $M_l^\varepsilon$ 
take values in $\mathbb{R}^{d+1},$ with the Maxwellian functions $M_{l}^\varepsilon$ Lipschitz continuous, $\lambda_{l}=(\lambda_{l1}, \cdots, \lambda_{ld})$ are constant velocities, and $L \ge d+1$. The $\bar{M}_l^\varepsilon$ are the perturbed Maxwellian functions, which will be expressed later, where $g$ is the first order correction of the Maxwellians in the Chapman-Enskog expansion. Moreover,  $\bar{\rho}>0$ is a given constant value, and $\varepsilon$ and $\tau$ are positive parameters. 
Denoting by ${f_{l}}_{j}^\varepsilon, {M_{l}}_{j}^\varepsilon$, for $j=0, \cdots, d,$ the $d+1$ components of $f_{l}^\varepsilon, M_{l}^\varepsilon$ for each $l=1, \cdots, L$, let us set

\begin{equation}
\label{q_BGK}
\rho^\varepsilon=\sum_{l=1}^{L} {f_{l}}_{0}^\varepsilon(t,x) \,\,\,\,\, \text{and} \,\,\,\,\,
{q}_{j}^\varepsilon=\varepsilon \rho^\varepsilon {u}_{j}^\varepsilon=\sum_{l=1}^{L}{f_{l}}_{j}^\varepsilon(t,x).
\end{equation}

In \cite{CN, VBouchut}, the convergence of the solutions to the vector BGK model introduced above to the solutions to the incompressible Navier-Stokes equations is studied numerically. More precisely, assuming that, in a suitable functional space,
$$\displaystyle \rho^\varepsilon \rightarrow \hat{\rho}, \,\,\,\,\,\,\,\,\, \textbf{u}^\varepsilon \rightarrow \hat{\textbf{u}}, \,\,\,\,\,\,\,\,\, \text{and} \,\,\,\,\,\,\,\,\, \frac{\rho^\varepsilon - \bar{\rho}}{\varepsilon^2} \rightarrow \hat{P},$$
under some consistency conditions of the BGK approximation with respect to the Navier-Stokes equations, see \cite{CN},  
it can be shown that the couple $(\hat{\textbf{u}}, \hat{P})$ is a solution to the incompressible Navier-Stokes equations. The aim of this paper is to provide a rigorous proof of this convergence in the Sobolev spaces. 
The aim of the present paper is to provide a rigorous proof of this convergence in the Sobolev spaces. 

Vector BGK models come from the ideas of kinetic approximations for compressible flows. They are inspired by the hydrodynamic limits of the Boltzmann equation: see \cite{Golse1, Golse2, Cercignani} for the limit to the compressible Euler equations, and see \cite{Esposito, Laure} for the incompressible Navier-Stokes equations. In this regard, one of the main directions has been the approximation of hyperbolic systems with discrete velocities BGK models, as in \cite{Brenier, XinJin, Natalini, Bouchut, Perthame1}. Similar results have been obtained for convection-diffusion systems under the diffusive scaling \cite{Toscani, BGN, Lattanzio, Aregba2}.
In the framework of the BGK approximations, one of the first important contributions was given in computational physics by the so called \emph{Lattice-Boltzmann methods}, see for instance \cite{Succi, Wolf}. Under some assumptions on the physical parameters, LBMs approximate the incompressible Navier-Stokes equations by scalar velocities models of kinetic equations, and a rigorous mathematical result on the validity of these kinds of approximations was proved in \cite{Yong1}. Other partially hyperbolic approximations of the Navier-Stokes equations were developed in \cite{BNP, Raugel, Imene, Imene1}. 

The vector BGK systems studied in the present paper are a combination of the ideas of discrete velocities BGK approximations and LBMs. They are called \emph{vector BGK models} since, unlike the LBMs  \cite{Succi, Wolf}, they associate every scalar velocity with one vector of unknowns. Another fruitful property of vector BGK models is their natural compatibility with a mathematical entropy, \cite{Bouchut}, which provides a nice analytical structure and stability properties. The work of the present paper takes its roots in \cite{CN, VBouchut}, where vector BGK approximations for the incompressible Navier-Stokes equations were introduced. Here we prove a rigorous local in time convergence result for the smooth solutions to the vector BGK system to the smooth solutions to the Navier-Stokes equations. 
In this paper we focus on the two dimensional case in space. Following \cite{CN}, let us set $d=2,$ $L=5,$ and 
\begin{equation}
\label{w_BGK}
w^{\varepsilon}=(\rho^\varepsilon, \textbf{q}^\varepsilon)=(\rho^\varepsilon, q_{1}^\varepsilon, q_{2}^\varepsilon)=(\rho^\varepsilon, \varepsilon \rho^\varepsilon u_1^\varepsilon, \varepsilon\rho^\varepsilon u_2^\varepsilon)=\sum_{l=1}^{5}f_{l}^{\varepsilon} \in \mathbb{R}^3.
\end{equation}
Fix $\lambda, \tau > 0 $ and let $\varepsilon>0 $ be a small parameter, which is going to zero in the singular perturbation limit. Thus, we get a five velocities model (15 scalar equations):
\begin{equation}
\label{BGK_NS}
\begin{cases}
& \partial_{t}f_{1}^{\varepsilon} + \frac{\lambda}{\varepsilon} \partial_{x}f_{1}^{\varepsilon}=\frac{1}{\tau \varepsilon^{2}}(M_{1}(w^{\varepsilon})-f_{1}^{\varepsilon}), \\
& \partial_{t}f_{2}^{\varepsilon} + \frac{\lambda}{\varepsilon} \partial_{y}f_{2}^{\varepsilon}=\frac{1}{\tau \varepsilon^{2}}(M_{2}(w^{\varepsilon})-f_{2}^{\varepsilon}), \\
& \partial_{t}f_{3}^{\varepsilon}-\frac{\lambda}{\varepsilon}\partial_{x}f_{3}^{\varepsilon}=\frac{1}{\tau \varepsilon^{2}}(M_{3}(w^{\varepsilon})-f_{3}^{\varepsilon}), \\
& \partial_{t}f_{4}^{\varepsilon}-\frac{\lambda}{\varepsilon}\partial_{y}f_{4}^{\varepsilon}=\frac{1}{\tau \varepsilon^{2}}(M_{4}(w^{\varepsilon})-f_{4}^{\varepsilon}), \\
& \partial_{t}f_{5}^{\varepsilon}=\frac{1}{\tau \varepsilon^{2}}(M_{5}(w^{\varepsilon})-f_{5}^{\varepsilon}). \\
\end{cases}
\end{equation}
Here the Maxwellian functions $M_j \in \mathbb{R}^3$ have the following expressions:
\begin{equation}
\label{Maxwellians}
M_{1,3}(w^{\varepsilon})=aw^{\varepsilon} \pm \frac{A_{1}(w^{\varepsilon})}{2\lambda}, ~~~~~ M_{2,4}(w^\varepsilon)=aw^{\varepsilon} \pm \frac{A_{2}(w^{\varepsilon})}{2\lambda}, ~~~~~ M_{5}(w^{\varepsilon})=(1-4a)w^{\varepsilon},
\end{equation}
where
\begin{equation}
\label{Fluxes_BGK}
A_{1}(w^{\varepsilon})=\left(\begin{array}{c}
q_{1}^\varepsilon \\
\frac{(q_{1}^\varepsilon)^{2}}{\rho^\varepsilon}+P(\rho^\varepsilon)\\
\frac{q_{1}^\varepsilon q_{2}^\varepsilon}{\rho^\varepsilon}
\end{array}\right), ~~~~~
A_{2}(w^{\varepsilon})=\left(\begin{array}{c}
q_{2}^\varepsilon \\
\frac{q_{1}^\varepsilon q_{2}^\varepsilon}{\rho^\varepsilon} \\
\frac{(q_{2}^\varepsilon)^{2}}{\rho^\varepsilon}+P(\rho^\varepsilon)
\end{array}\right),
\end{equation}
\begin{equation}
\label{Pressure_approximation_BGK}
P(\rho^\varepsilon)={\rho^\varepsilon}-\bar{\rho},
\end{equation}
and
\begin{equation}
\label{a_def_BGK}
a=\frac{\nu}{2\lambda^{2}\tau},
\end{equation}
where $\nu$ is the viscosity coefficient in (\ref{real_NS}).
In the following, our main goal is to obtain uniform energy estimates for the solutions to the vector BGK model (\ref{BGK_NS}) in the Sobolev spaces and to get the convergence by compactness. In \cite{CN, VBouchut}, an $L^2$ estimate was obtained by using the entropy function associated with the vector BGK model, whose existence is proved in \cite{Bouchut}. However, there is no explicit expression for the kinetic entropy, so we do not know the weights, with respect to the singular parameter, of the terms of the classical symmetrizer derived by the entropy, see \cite{HN} for the one dimensional case and \cite{Bianchini, Yong1} for the general case. For this reason, the existence of an entropy is not enough to control the higher order estimates. Moreover, our pressure term is given by (\ref{Pressure_approximation_BGK}) and it is linear with respect to $\rho^\varepsilon,$ so the estimates in \cite{CN, VBouchut} no more hold. To solve this problem, we use a constant right symmetrizer, whose entries are weighted in terms of the singular parameter in a suitable way. Besides, the symmetrization obtained by the right multiplication provides the conservative-dissipative form introduced in \cite{Bianchini}. The dissipative property of the symmetrized system holds under the following hypothesis.
\begin{assumption} [Dissipation condition]
\label{a_condition_BGK}
We assume the following structural condition:
\begin{equation*}
0 < a < \frac{1}{4}.
\end{equation*}
\end{assumption}
Finally, we point out that Assumption \ref{a_condition_BGK} is a necessary condition, also in the case of nonlinear pressure terms, for the existence of a kinetic entropy for the approximating system, see \cite{Bouchut}.

\subsection{Plan of the paper}
In Section \ref{General_framework} we introduce the vector BGK approximation and the general setting of the problem. Section \ref{Symmetrizer_CD} is dedicated to the discussion on the symmetrizer and the conservative-dissipative form. In Section \ref{Energy_estimates} we get uniform energy estimates to prove the convergence, in Section \ref{Convergence}, of the solutions to the vector BGK approximation to the solutions to the incompressible Navier-Stokes equations. Finally, Section \ref{Conclusions} is devoted to our conclusions and perspectives.

\section{General framework}
\label{General_framework}
Let us set
\begin{equation*}
U^{\varepsilon}=(f_{1}^{\varepsilon}, f_{2}^{\varepsilon}, f_{3}^{\varepsilon}, f_{4}^{\varepsilon}, f_{5}^{\varepsilon}) \in \mathbb{R}^{3\times 5},
\end{equation*}
and let us write the compact formulation of equations (\ref{BGK_NS})-(\ref{initial_conditions_BGK}), which reads
\begin{equation}
\label{BGK_NS_compact} \partial_{t}U^{\varepsilon}+\Lambda_{1}\partial_{x}U^{\varepsilon}+\Lambda_{2}\partial_{y}U^{\varepsilon}=\frac{1}{\tau \varepsilon^{2}} (M(U^{\varepsilon})-U^{\varepsilon}), \\
\end{equation}

with initial data
\begin{equation}
\label{first_initial_data_BGK}
 U_{0}^{\varepsilon}=f_{l}^{\varepsilon}(0,x)=\bar{M}_l^\varepsilon (\bar{\rho}, \varepsilon\bar{\rho}\textbf{u}_{0})= M_{l}^\varepsilon(\bar{\rho}, \varepsilon\bar{\rho}\textbf{u}_{0})+\varepsilon g(\nabla \textbf{u}_0), \,\,\,\,\, l=1, \cdots, 5,
\end{equation}

where $\bar{M}_l^\varepsilon$ are the perturbed Maxwellian functions, with
$M_l^\varepsilon$ the Maxwellians in (\ref{Maxwellians}), and
\begin{equation}
\label{g_perturbation}
g(\nabla \textbf{u}_0)=\left(\begin{array}{c}
- a \lambda \tau  \partial_x w_0 \\
- a \lambda \tau  \partial_y w_0 \\
 a \lambda \tau  \partial_x w_0 \\
 a \lambda \tau  \partial_y w_0 \\
0
\end{array}\right), \quad w_0=(\bar{\rho}, \varepsilon \bar{\rho} \textbf{u}_0), 
\end{equation}
\begin{equation*}
\Lambda_{1}=\left(\begin{array}{ccccc}
\frac{\lambda}{\varepsilon}Id & 0 & 0 & 0 & 0 \\
0 & 0 & 0 & 0 & 0 \\
0 & 0 & -\frac{\lambda}{\varepsilon}Id & 0 & 0 \\
0 & 0 & 0 & 0 & 0 \\
0 & 0 & 0 & 0 & 0 \\
\end{array}\right), ~~~ \Lambda_{2}=\left(\begin{array}{ccccc}
0 & 0 & 0 & 0 & 0 \\
0 & \frac{\lambda}{\varepsilon}Id & 0 & 0 & 0 \\
0 & 0 & 0 & 0 & 0 \\
0 & 0 & 0 & -\frac{\lambda}{\varepsilon}Id & 0 \\
0 & 0 & 0 & 0 & 0 \\
\end{array}\right),
\end{equation*}

$Id$ is the $3 \times 3$ identity matrix, and
\begin{equation}
\label{M_BGK}
M(U^{\varepsilon})=(M_{1}^\varepsilon(w^{\varepsilon}), M_{2}^\varepsilon(w^{\varepsilon}), M_{3}^\varepsilon(w^{\varepsilon}), M_{4}^\varepsilon(w^{\varepsilon}), M_{5}^\varepsilon(w^{\varepsilon})).
\end{equation}

\subsection{Conservative variables}
We define the following change of variables:
\begin{equation}
\label{variables_BGK}
\begin{array}{l}
w^{\varepsilon}=\sum_{l=1}^{5}f_{l}^{\varepsilon}, ~~
m^{\varepsilon}=\frac{\lambda}{\varepsilon}(f_{1}^{\varepsilon}-f_{3}^{\varepsilon}), ~~
\xi^{\varepsilon}=\frac{\lambda}{\varepsilon}(f_{2}^{\varepsilon}-f_{4}^{\varepsilon}), ~~
k^{\varepsilon}=f_{1}^{\varepsilon}+f_{3}^{\varepsilon}, ~~
h^{\varepsilon}=f_{2}^{\varepsilon}+f_{4}^{\varepsilon}.
\end{array}
\end{equation}

This way, the vector BGK model (\ref{BGK_NS}) reads:
\begin{equation}
\label{BGK_NS_new_variables}
\begin{cases}
& \partial_{t}w^{\varepsilon}+\partial_{x}m^{\varepsilon}+\partial_{y}\xi^{\varepsilon}=0; \\
& \partial_{t}m^{\varepsilon} + \frac{\lambda^{2}}{\varepsilon^{2}}\partial_{x}k^{\varepsilon}=\frac{1}{\tau \varepsilon^{2}}(\frac{A_{1}(w^{\varepsilon})}{\varepsilon}-m^{\varepsilon}), \\
& \partial_{t}\xi^{\varepsilon}+\frac{\lambda^{2}}{\varepsilon^{2}}\partial_{y}h^{\varepsilon}=\frac{1}{\tau \varepsilon^{2}}(\frac{A_{2}(w^{\varepsilon})}{\varepsilon}-\xi^{\varepsilon}), \\
& \partial_{t}k^{\varepsilon}+\partial_{x}m^{\varepsilon}=\frac{1}{\tau \varepsilon^{2}}(2aw^{\varepsilon}-k^{\varepsilon}), \\
& \partial_{t}h^{\varepsilon}+\partial_{y}\xi^{\varepsilon}=\frac{1}{\tau \varepsilon^{2}}(2aw^{\varepsilon}-h^{\varepsilon}). \\
\end{cases}
\end{equation}

We make a slight modification of system (\ref{BGK_NS_new_variables}). Set $\displaystyle \bar{w}=(\bar{\rho}, 0, 0)$ and
\begin{equation}
\label{def_translation_BGK}
 w^{\varepsilon \, \star}:=w^{\varepsilon}-\bar{w}=(w_{1}^\varepsilon-\bar{\rho}, w_{2}^\varepsilon, w_{3}^\varepsilon) , ~~~ k^{\varepsilon \, \star}=k^{\varepsilon}-2a\bar{w}, ~~~ h^{\varepsilon \, \star}=h^{\varepsilon}-2a\bar{w}.
\end{equation}

In the following, we are going to work with the modified variables. System (\ref{BGK_NS_new_variables}) reads:

\begin{equation}
\label{BGK_NS_new_variables_translated}
\begin{cases}
& \partial_{t}w^{\varepsilon \, \star}+\partial_{x}m^{\varepsilon}+\partial_{y}\xi^{\varepsilon}=0; \\
& \partial_{t}m^{\varepsilon} + \frac{\lambda^{2}}{\varepsilon^{2}}\partial_{x}k^{\varepsilon \, \star}=\frac{1}{\tau \varepsilon^{2}}(\frac{A_{1}(w^{\varepsilon \, \star}+\bar{w})}{\varepsilon}-m^{\varepsilon}), \\
& \partial_{t}\xi^{\varepsilon}+\frac{\lambda^{2}}{\varepsilon^{2}}\partial_{y}h^{\varepsilon \, \star}=\frac{1}{\tau \varepsilon^{2}}(\frac{A_{2}(w^{\varepsilon \, \star}+\bar{w})}{\varepsilon}-\xi^{\varepsilon}), \\
& \partial_{t}k^{\varepsilon \, \star}+\partial_{x}m^{\varepsilon}=\frac{1}{\tau \varepsilon^{2}}(2aw^{\varepsilon \, \star}-k^{\varepsilon \, \star}), \\
& \partial_{t}h^{\varepsilon \, \star}+\partial_{y}\xi^{\varepsilon}=\frac{1}{\tau \varepsilon^{2}}(2aw^{\varepsilon \, \star} - h^{\varepsilon \, \star}). \\
\end{cases}
\end{equation}

Notice from (\ref{Fluxes_BGK}) that
$$A_{1}(w^\varepsilon)=\left(\begin{array}{c}
q_{1}^\varepsilon \\
\frac{({q_{1}^\varepsilon})^{2}}{\rho^\varepsilon}+\rho^\varepsilon-\bar{\rho} \\
\frac{q_{1}^\varepsilon q_{2}^\varepsilon}{\rho^\varepsilon}
\end{array}\right)=\left(\begin{array}{c}
w_{2}^{\varepsilon \, \star} \\
\frac{({w_{2}^{\varepsilon \, \star}})^{2}}{w_{1}^{\varepsilon \, \star}+\bar{\rho}}+w_{1}^{\varepsilon \, \star} \\
\frac{w_{2}^{\varepsilon \, \star} w_{3}^{\varepsilon \, \star}}{w_{1}^{\varepsilon \, \star}+\bar{\rho}}
\end{array}\right)=A_{1}(w^{\varepsilon \, \star}+\bar{w}),$$
and, similarly,
$$A_{2}(w^{\star})=\left(\begin{array}{c}
q_{2}^\varepsilon \\
\frac{q_{1}^\varepsilon q_{2}^\varepsilon}{\rho^\varepsilon}\\
\frac{(q_{2}^\varepsilon)^{2}}{\rho^\varepsilon}+\rho^\varepsilon-\bar{\rho} \\
\end{array}\right)=\left(\begin{array}{c}
w_{3}^{\varepsilon \, \star} \\
\frac{w_{2}^{\varepsilon \, \star} w_{3}^{\varepsilon \, \star}}{w_{1}^{\varepsilon \, \star}+\bar{\rho}}\\
\frac{(w_{3}^{\varepsilon \, \star})^2}{w_{1}^{\varepsilon \, \star}+\bar{\rho}}+w_{1}^{\varepsilon \, \star} 
\end{array}\right)=A_{2}(w^{\varepsilon \, \star}+\bar{w}).$$

Hereafter, we will omit the apexes $\varepsilon \, \star$ for $w^{\varepsilon \, \star}, k^{\varepsilon \, \star}, h^{\varepsilon \, \star}$, and the apex $\varepsilon$ for $m^\varepsilon, \xi^\varepsilon,$ when there is no ambiguity. 

Let us define the $15 \times 15$  matrix
\begin{equation}
\label{matrix_C_NS}
C=\left(\begin{array}{ccccc}
Id & Id & Id & Id & Id \\
\varepsilon \lambda Id & 0 & -\varepsilon \lambda Id & 0 & 0 \\
0 & \varepsilon \lambda Id & 0 & -\varepsilon \lambda Id & 0 \\
\varepsilon^{2}Id & 0 & \varepsilon^{2}Id & 0 & 0 \\
0 & \varepsilon^{2}Id & 0 & \varepsilon^{2}Id & 0 \\
\end{array}\right),
\end{equation}

and set
\begin{equation}
\label{W_variables_NS}
W=(w, \varepsilon^{2} m,  \varepsilon^{2} \xi,  \varepsilon^{2} k,  \varepsilon^{2} h):=CU-(\bar{w}, 0, 0, 0, 0).
\end{equation}

Thus, we can write the translated system (\ref{BGK_NS_new_variables_translated}) in the compact form
\begin{equation}
\label{BGK_NS_compact_new_variables} \partial_{t}W+B_1\partial_{x}W+B_{2}\partial_{y}W=\frac{1}{\tau \varepsilon^{2}}(\tilde{M}(W)-W), 
\end{equation}
with initial conditions
\begin{equation}
\label{initial_conditions_compact_new_variables_NS}
W_{0}=CU_{0}-(\bar{w}, 0, 0, 0, 0),
\end{equation}
where $U_0$ is given by (\ref{first_initial_data_BGK}),
$$B_1=C \Lambda_1 C^{-1}, \qquad B_2=C \Lambda_2 C^{-1},$$

\begin{equation}
\label{L1_L2_new_variables_NS}
B_{1}=\left(\begin{array}{ccccc}
0 & \frac{1}{\varepsilon^2}Id & 0 & 0 & 0 \\
0 & 0 & 0 & \frac{\lambda^2}{\varepsilon^2} & 0 \\
0 & 0 & 0 & 0 & 0 \\
0 & Id & 0 & 0 & 0 \\
0 & 0 & 0 & 0 & 0 \\
\end{array}\right), \quad
B_{2}=\left(\begin{array}{ccccc}
 0 & 0 & \frac{1}{\varepsilon^2}Id & 0 & 0 \\
 0 & 0 & 0 & 0 & 0 \\
 0 & 0 & 0 & 0 & \frac{\lambda^2}{\varepsilon^2} \\
 0 & 0 & 0 & 0 & 0 \\
 0 & 0 & Id & 0 & 0 \\
 \end{array}\right), 
\end{equation}
and 
\begin{equation*}
\tilde{M}(W)=CM(C^{-1}W)= CM(U).
\end{equation*}

Here,
\begin{equation*}
\frac{1}{\tau \varepsilon^{2}}(\tilde{M}(W)-W)=\frac{1}{\tau}\left(\begin{array}{c}
0 \\
\frac{A_{1}(w+\bar{w})}{\varepsilon}-\frac{\varepsilon^2 m}{\varepsilon^2} \\ \\
\frac{A_{2}(w+\bar{w})}{\varepsilon}-\frac{\varepsilon^2 \xi}{\varepsilon^2} \\ \\
2aw-\frac{\varepsilon^2 k}{\varepsilon^2} \\ \\
2aw-\frac{\varepsilon^2 h}{\varepsilon^2} \\ \\
\end{array}\right)=\frac{1}{\tau}\left(\begin{array}{c}
0 \\
\frac{1}{\varepsilon}\left(\begin{array}{c}
w_{2} \\
\frac{w_{2}^{2}}{w_{1}+\bar{\rho}}+w_{1}\\
\frac{w_{2} w_{3}}{w_{1}+\bar{\rho}}
\end{array}\right)-\frac{\varepsilon^2 m}{\varepsilon^2} \\
\frac{1}{\varepsilon}\left(\begin{array}{c}
w_{3} \\
\frac{w_{2}w_{3}}{w_{1}+\bar{\rho}}\\
\frac{w_{3}^2}{w_{1}+\bar{\rho}}+w_{1}
\end{array}\right)-\frac{\varepsilon^2 \xi}{\varepsilon^2} \\
2aw-\frac{\varepsilon^2 k}{\varepsilon^2} \\
2aw-\frac{\varepsilon^2 h}{\varepsilon^2} \\
\end{array}\right)
\end{equation*}

\begin{equation*}
=\frac{1}{\tau}\left(\begin{array}{ccccc}
0 & 0 & 0 & 0 & 0 \\
\frac{1}{\varepsilon}\left(\begin{array}{ccc}
0 & 1 & 0 \\
1 & 0 & 0 \\
0 & 0 & 0
\end{array}\right) & -\frac{1}{\varepsilon^2}Id & 0 & 0 & 0 \\
\frac{1}{\varepsilon}\left(\begin{array}{ccc}
0 & 0 & 1 \\
0 & 0 & 0 \\
1 & 0 & 0
\end{array}\right) & 0 & -\frac{1}{\varepsilon^2}Id & 0 & 0 \\
2aId & 0 & 0 & -\frac{1}{\varepsilon^2}Id & 0 \\
2aId & 0 & 0 & 0 & -\frac{1}{\varepsilon^2}Id \\
\end{array}\right)W
+\frac{1}{\tau}\left(\begin{array}{c}
0 \\ \\
\frac{1}{\varepsilon}\left(\begin{array}{c}
0 \\
\frac{w_{2}^{2}}{w_{1}+\bar{\rho}} \\ \\
\frac{w_{2} w_{3}}{w_{1}+\bar{\rho}}
\end{array}\right) \\ \\
\frac{1}{\varepsilon}\left(\begin{array}{c}
0 \\
\frac{w_{2} w_{3}}{w_{1}+\bar{\rho}}\\ \\
\frac{w_{3}^2}{w_{1}+\bar{\rho}}
\end{array}\right) \\ \\
0 \\
0\\
\end{array}\right)
\end{equation*}
\begin{equation}
\label{source_term_NS}
=:-LW+N(w+\bar{w}),
\end{equation}

where $-L$ is the linear part of the source term of (\ref{BGK_NS_compact_new_variables}), while $N$ is the remaining nonlinear one. Thus, we can rewrite system (\ref{BGK_NS_compact_new_variables}) as follows:
\begin{equation}
\label{BGK_NS_compact_new_variables_2} \partial_{t}W+B_{1}\partial_{x}W+B_{2}\partial_{y}W=-LW+N(w+\bar{w}).
\end{equation}

\section{The weighted constant right symmetrizer and the conservative-dissipative form}
\label{Symmetrizer_CD}
According to the theory of semilinear hyperbolic systems, see for instance \cite{Majda, Benzoni}, we need a symmetric formulation of system (\ref{BGK_NS_compact_new_variables_2}) in order to get energy estimates. However, we are dealing with a singular perturbation system, so any symmetrizer for system (\ref{BGK_NS_compact_new_variables_2}) is not enough. In other words, we look for a symmetrizer which provides a suitable dissipative structure for system (\ref{BGK_NS_compact_new_variables_2}). In this context, notice that the first equation of system (\ref{BGK_NS_compact_new_variables_2}) reads $$\partial_{t}w+\partial_{x}m+\partial_{y}\xi=0,$$
i.e. the first term of the source vanishes, and $w$ is a conservative variable. We want to take advantage of this conservative property, in order to simplify the algebraic structure of the linear part of the source term. To this end, rather than a classical Friedrichs left symmetrizer, see again \cite{Majda, Benzoni},  we look for a right symmetrizer for (\ref{BGK_NS_compact_new_variables_2}),  which allows to get the conservative-dissipative form introduced in \cite{Bianchini}. More precisely, the right multiplication easily provides the conservative structure in \cite{Bianchini}, while the dissipation is proved a posteriori.
Besides, the symmetrizer $\Sigma$ presents constant $\varepsilon$-weighted entries and this allows us to control the nonlinear part $N$ of the source term (\ref{source_term_NS}) of system (\ref{BGK_NS_compact_new_variables_2}). To be complete, we point out that the inverse matrix $\Sigma^{-1}$ is a left symmetrizer for system 
(\ref{BGK_NS_compact_new_variables_2}), according to the definitions given in \cite{Majda, Benzoni}. However, the product $-\Sigma^{-1}L$ is a full matrix, so the symmetrized version of system (\ref{BGK_NS_compact_new_variables_2}), obtained by the left multiplication by $\Sigma^{-1},$ does not provide the conservative-dissipative form in \cite{Bianchini}. 

Let us explicitly write the symmetrizer
\begin{equation}
\label{Sigma}
\Sigma=\left(\begin{array}{ccccc}
Id &\varepsilon \sigma_1 &  \varepsilon \sigma_2 & 2a\varepsilon^2 Id & 2a \varepsilon^2 Id \\
\varepsilon \sigma_1 & 2\lambda^2 a \varepsilon^2 Id & 0 &  \varepsilon^3 \sigma_1 & 0 \\
\varepsilon \sigma_2 & 0 & 2\lambda^2 a \varepsilon^2 Id & 0 & \varepsilon^3 \sigma_2 \\
2a\varepsilon^2 Id & \varepsilon^3 \sigma_1 & 0 & 2a \varepsilon^4 Id & 0 \\
2a \varepsilon^2 Id & 0 & \varepsilon^3 \sigma_2 & 0 & 2a \varepsilon^4 Id \\
\end{array}\right),
\end{equation}
where
\begin{equation}
\label{Mat12}
{\sigma}_{1}=\left(\begin{array}{ccc}
0 & 1 & 0 \\
1 & 0 & 0 \\
0 & 0 & 0 \\
\end{array}\right) ~~\text{and}~~ {\sigma}_{2}=\left(\begin{array}{ccc}
0 & 0 & 1 \\
0 & 0 & 0 \\
1 & 0 & 0 \\
\end{array}\right).
\end{equation}

It is easy to check that $\Sigma$ is a constant right symmetrizer for system (\ref{BGK_NS_compact_new_variables_2}) since, taking $B_{1}, B_{2}$ and $L$ in (\ref{L1_L2_new_variables_NS}) and (\ref{source_term_NS}) respectively, 

\begin{equation*}
B_1 \Sigma= \Sigma B_{1}^T, ~~~~~ B_{2} \Sigma= \Sigma B_{2}^{T}, 
\end{equation*}

$$-L\Sigma=\left(\begin{array}{cc}
0 & \textbf{0} \\
\textbf{0}^T & -\tilde{L} \\
\end{array}\right)$$
\begin{equation}
\label{LSigma}
=\frac{1}{\tau}\left(\begin{array}{ccccc}
0 & 0 & 0 & 0 & 0 \\
0 & -2\lambda^2 a Id + \sigma_1^2 &  \sigma_1 \sigma_2 & (2a-1)\varepsilon  \sigma_1 & 2a \varepsilon \sigma_1 \\
0 &  \sigma_1 \sigma_2 & -2\lambda^2 a Id +  \sigma_2^2 & 2a \varepsilon  \sigma_2 & (2a-1)\varepsilon  \sigma_2 \\
0 & (2a-1)\varepsilon  \sigma_1 & 2a \varepsilon  \sigma_2 & 2a(2a-1)\varepsilon^2 Id & 4a^2 \varepsilon^2 Id \\
0 & 2a\varepsilon  \sigma_1 & (2a-1)\varepsilon  \sigma_2 & 4a^2 \varepsilon^2 Id & 2a(2a-1)\varepsilon^2 Id \\
\end{array}\right).
\end{equation}

Now, we define the following change of variables:
\begin{equation}
\label{change_Sigma}
W=\Sigma \tilde{W}=\Sigma (\tilde{w}, \varepsilon^2 \tilde{m}, \varepsilon^2 \tilde{\xi}, \varepsilon^2 \tilde{k}, \varepsilon^2 \tilde{h}),
\end{equation}
with $W$ in (\ref{W_variables_NS}). System (\ref{BGK_NS_compact_new_variables_2}) reads:
\begin{equation}
\label{BGK_compact_Sigma}
\Sigma \partial_{t}\tilde{W} + B_{1}\Sigma \partial_{x}\tilde{W} + B_{2}\Sigma \partial_{y}\tilde{W} = -L\Sigma \tilde{W} + N ((\Sigma \tilde{W})_{1}+\bar{w}),
\end{equation}
where $(\Sigma \tilde{W})_{1}$ is the first component of the unknown vector $\Sigma \tilde{W}$. Now, we want to show that $\Sigma$ in (\ref{Sigma}) is strictly positive definite. Thus,
\begin{align*}
(\Sigma \tilde{W}, \tilde{W})_{0} &=||\tilde{w}||_{0}^{2}+2\lambda^2 a \varepsilon^6 (||\tilde{m}||_{0}^{2} + ||\tilde{\xi}||_{0}^{2}) + 2a \varepsilon^8 (||\tilde{k}||_{0}^{2}+||\tilde{h}||_{0}^{2}) + 2( \varepsilon^3 \sigma_1 \tilde{m}, \tilde{w})_{0} \\\\
&+2 (\varepsilon^3 \sigma_2 \tilde{\xi}, \tilde{w})_{0}+4a\varepsilon^{4} ( \tilde{k}+\tilde{h}, \tilde{w})_{0}+2\varepsilon^7( \sigma_1 \tilde{k}, \tilde{m} )_{0}
+2\varepsilon^7( \sigma_2 \tilde{h}, \tilde{\xi})_{0} \\\\
&=||\tilde{w}||_{0}^{2}+2\lambda^2 a \varepsilon^6 (||\tilde{m}||_{0}^{2} + ||\tilde{\xi}||_{0}^{2}) + 2a \varepsilon^8 (||\tilde{k}||_{0}^{2}+||\tilde{h}||_{0}^{2}) + I_{1}+I_{2}+I_{3}+I_{4}+I_{5}.
\end{align*}

Now, taking two positive constants $\delta, \mu$ and by using the Cauchy inequality, we have:
\begin{equation*}
I_{1}=2\varepsilon^3 [(\tilde{m}_{2}, \tilde{w}_{1})_{0} + (\tilde{m}_{1}, \tilde{w}_{2})_{0}] \ge - \delta \varepsilon^6 || \tilde{m}_{2}||_{0}^{2} - \frac{||\tilde{w}_{1}||_{0}^{2}}{\delta} - \delta\varepsilon^6||\tilde{m}_{1}||_{0}^{2}-\frac{||\tilde{w}_{2}||_{0}^{2}}{\delta};
\end{equation*}

\begin{equation*}
\begin{array}{l}
I_{2}=2\varepsilon^3 [(\tilde{\xi}_{3}, \tilde{w}_{1})_{0} + (\tilde{\xi}_{1}, \tilde{w}_{3})_{0}] \ge -\delta \varepsilon^6 ||\tilde{\xi}_{3}||_{0}^{2}-\frac{||\tilde{w}_{1}||_{0}^{2}}{\delta}-\delta \varepsilon^6 ||\tilde{\xi}_{1}||_{0}^2 - \frac{||\tilde{w}_{3}||_{0}^2}{\delta};
\\\\
I_{3}=4a \varepsilon^4 [(\tilde{k}, \tilde{w})_{0} + (\tilde{h}, \tilde{w})_{0}] \ge - 2a \mu ||\tilde{w}||_{0}^2 - \frac{2a \varepsilon^8}{\mu}||\tilde{k}||_{0}^2 - 2a \mu ||\tilde{w}||_{0}^2 - \frac{2a \varepsilon^8}{\mu}||\tilde{h}||_{0}^2;
\\\\
I_{4}=2\varepsilon^7 [(\tilde{k}_{2}, \tilde{m}_{1})_{0}+(\tilde{k}_{1}, \tilde{m}_{2})_{0}] \ge - \frac{\varepsilon^8}{\delta}||\tilde{k}_{2}||_{0}^2 - \delta \varepsilon^6 ||\tilde{m}_{1}||_{0}^2 - \frac{\varepsilon^8}{\delta} ||\tilde{k}_{1}||_{0}^2 -\delta \varepsilon^6 ||\tilde{m}_2||_{0}^2;
\\\\
I_{5}=2\varepsilon^7 [(\tilde{h}_{3}, \tilde{\xi}_{1})_{0}+(\tilde{h}_{1}, \tilde{\xi}_{3})_{0}] \ge - \frac{\varepsilon^8}{\delta}||\tilde{h}_{3}||_{0}^2 - \delta\varepsilon^6 ||\tilde{\xi}_{1}||_{0}^2 - \frac{\varepsilon^8}{\delta} ||\tilde{h}_{1}||_{0}^2 - \delta \varepsilon^6 ||\tilde{\xi}_3||_{0}^2.
\end{array}
\end{equation*}
Thus, putting them all together,
\begin{equation}
\label{last_positive_symmetrizer_BGK}
\begin{aligned}
(\Sigma \tilde{W}, \tilde{W})_{0} & \ge ||\tilde{w}_{1}||_{0}^2 \Bigg[1-\frac{2}{\delta}-4a\mu \Bigg]+||\tilde{w}_{2}||_{0}^2 \Bigg[1 - \frac{1}{\delta} - 4a\mu \Bigg] 
+ ||\tilde{w}_3||_0^2 \Bigg[1-\frac{1}{\delta}-4a\mu \Bigg]
\\\\
& +\varepsilon^6||\tilde{m}_1^\varepsilon||_0^2 [2\lambda^2 a - 2 \delta] +\varepsilon^6||\tilde{m}_2^\varepsilon||_0^2[2\lambda^2 a - 2 \delta] +\varepsilon^6||\tilde{m}_3^\varepsilon||_0^2[2\lambda^2 a ] 
\\\\
& +\varepsilon^6||\tilde{\xi}_1^\varepsilon||_0^2 [2\lambda^2 a - 2\delta] +\varepsilon^6||\tilde{\xi}_2^\varepsilon||_0^2[2\lambda^2 a] +\varepsilon^6||\tilde{\xi}_3^\varepsilon||_0^2[2\lambda^2 a -2\delta] 
\\\\
& +\varepsilon^8||\tilde{k}_1||_0^2 \Bigg[2a-\frac{2a}{\mu}-\frac{1}{\delta}\Bigg]+\varepsilon^8||\tilde{k}_2||_0^2 \Bigg[2a-\frac{2a}{\mu}-\frac{1}{\delta}\Bigg]+\varepsilon^8||\tilde{k}_3||_0^2 \Bigg[2a-\frac{2a}{\mu}\Bigg]
\\\\
& +\varepsilon^8||\tilde{h}_1||_0^2 \Bigg[2a-\frac{2a}{\mu}-\frac{1}{\delta}\Bigg] +\varepsilon^8||\tilde{h}_2||_0^2 \Bigg[2a-\frac{2a}{\mu}\Bigg] +\varepsilon^8||\tilde{h}_3||_0^2 \Bigg[2a-\frac{2a}{\mu}-\frac{1}{\delta}\Bigg].
\end{aligned}
\end{equation}

Now, we can prove the following lemma.
\begin{lemma}
\label{Sigma_positivity_lemma}
If Assumption \ref{a_condition_BGK} is satisfied and $\lambda$ is big enough, then $\Sigma$ is strictly positive definite.
\end{lemma}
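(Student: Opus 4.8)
The plan is to choose the two free positive constants $\delta$ and $\mu$ in the estimate \eqref{last_positive_symmetrizer} so that every bracketed coefficient is strictly positive, exploiting the freedom to take $\lambda$ as large as we like. Looking at \eqref{last_positive_symmetrizer}, the coefficients split into three groups: the $\|\tilde w_i\|_0^2$ terms require $1-\frac{2}{\delta}-4a\mu>0$ (the worst case), the $\varepsilon^6\|\tilde m_i\|_0^2$ and $\varepsilon^6\|\tilde\xi_i\|_0^2$ terms require $2\lambda^2 a-2\delta>0$, and the $\varepsilon^8\|\tilde k_i\|_0^2$, $\varepsilon^8\|\tilde h_i\|_0^2$ terms require $2a-\frac{2a}{\mu}-\frac{1}{\delta}>0$, i.e. $2a\left(1-\frac1\mu\right)>\frac1\delta$.

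First I would fix $\delta$ and $\mu$ using only Assumption \ref{a_condition}. Since $0<a<\frac14$, I can pick $\mu>1$ close enough to $1$ and $\delta$ large enough that simultaneously $4a\mu<\frac12$ (say), which leaves room for $1-\frac2\delta-4a\mu>0$ once $\delta$ is large, and also $2a\left(1-\frac1\mu\right)>\frac1\delta$. Concretely: choose $\mu=2$, so the $k,h$-condition becomes $a>\frac1\delta$, i.e. $\delta>\frac1a$; and the $w$-condition becomes $1-\frac2\delta-8a>0$. This last inequality can only be arranged if $8a<1$, which is a genuine constraint — it is strictly stronger than $a<\frac14$. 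So a cleaner choice is to take $\mu$ very close to $1$: then $2a(1-\frac1\mu)$ is small and positive, forcing $\delta$ large, but the $w$-condition $1-\frac2\delta-4a\mu>0$ becomes, in the limit $\mu\to1^+$, $\delta\to\infty$, just $1-4a>0$, which holds by Assumption \ref{a_condition}. Thus the correct order is: pick any $a_0$ with $4a<4a_0<1$; choose $\mu>1$ so close to $1$ that $4a\mu<4a_0$; then choose $\delta$ large enough that both $\frac2\delta<1-4a_0$ and $\frac1\delta<2a(1-\frac1\mu)$ hold. With these $\delta,\mu$ fixed, all the coefficients in the first and third groups are strictly positive.

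Second, having fixed $\delta$, I would handle the middle group: the conditions $2\lambda^2 a-2\delta>0$ are satisfied as soon as $\lambda^2>\delta/a$, which is exactly where the hypothesis ``$\lambda$ big enough'' enters. The remaining coefficients $2\lambda^2 a$ and $2a$ (for the components $\tilde m_3,\tilde\xi_2,\tilde\xi_3$... — actually $\tilde m_3,\tilde\xi_2,\tilde k_3,\tilde h_2$, which carry no $\delta$- or $\mu$-penalty) are automatically positive. Therefore there exists $c>0$, depending on $a,\lambda,\delta,\mu$ but not on $\tilde W$, such that $(\Sigma\tilde W,\tilde W)_0\ge c\left(\|\tilde w\|_0^2+\varepsilon^6(\|\tilde m\|_0^2+\|\tilde\xi\|_0^2)+\varepsilon^8(\|\tilde k\|_0^2+\|\tilde h\|_0^2)\right)$, which is the asserted strict positive definiteness (for each fixed $\varepsilon>0$).

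I do not expect a serious obstacle here — the whole argument is elementary bookkeeping on the inequality \eqref{last_positive_symmetrizer}. The one point requiring care is the order of quantifiers: $\mu$ must be chosen first (close to $1$), then $\delta$ (large, depending on $\mu$), then $\lambda$ (large, depending on $\delta$ and $a$); getting this order wrong makes it look as though one needs $a<\frac18$ rather than $a<\frac14$. I would state this selection of constants explicitly and then simply read off positivity of each of the fifteen bracketed terms.
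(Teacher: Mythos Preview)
Your proposal is correct and follows essentially the same approach as the paper's proof. The paper states the choice of constants more compactly as $1<\mu<\frac{1}{4a}$, then $\delta>\max\bigl\{\frac{2}{1-4a\mu},\frac{1}{2a(1-1/\mu)}\bigr\}$, then $\lambda>\sqrt{\delta/a}$; your exploratory derivation (first trying $\mu=2$, then backing off to $\mu$ close to $1$ via an auxiliary $a_0$) arrives at exactly the same ordered selection, and your observation about the order of quantifiers is precisely the content of the paper's three nested conditions.
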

\begin{proof}
From (\ref{last_positive_symmetrizer_BGK}), we take
\begin{equation}
\label{Sigma_conditions}
\begin{cases}
1 < \mu < \frac{1}{4a}; \\
\delta > \max \{\frac{2}{1-4a\mu}, \frac{1}{2a(1-\frac{1}{\mu})} \}; \\
\lambda > \sqrt{\frac{\delta}{a}}. \\
\end{cases}
\end{equation}

Notice that  we can choose the constant velocity $\lambda$ as big as we need, therefore the third inequality is automatically verified.
\end{proof}

Now, we consider the linear part $-L\Sigma$ of the source term of (\ref{BGK_compact_Sigma}). 

Thus,
\begin{align*}
\tau(-L\Sigma \tilde{W}, \tilde{W})_0 &  = -2\lambda^2 a \varepsilon^4 (||\tilde{m}||_0^2 + ||\tilde{\xi}||_0^2) + 2a(2a-1)\varepsilon^6 (||\tilde{k}||_0^2+||\tilde{h}||_0^2)\\\\
&+\varepsilon^4||\tilde{m}_1||_0^2
+\varepsilon^4||\tilde{m}_2||_0^2
+\varepsilon^4||\tilde{\xi}_1||_0^2+\varepsilon^4||\tilde{\xi}_3||_0^2
+2\varepsilon^4 (\sigma_1 \sigma_2 \tilde{\xi}, \tilde{m})_0\\\\
&+2(2a-1)\varepsilon^5(\sigma_1 \tilde{k}, \tilde{m})_0+4a\varepsilon^5 (\sigma_1 \tilde{h}, \tilde{m})_0
+4a\varepsilon^5 (\sigma_2 \tilde{k}, \tilde{\xi})_0\\\\
&+2(2a-1)\varepsilon^5 (\sigma_2 \tilde{h}, \tilde{\xi})_0 + 8a^2 \varepsilon^6 (\tilde{h}, \tilde{k})_0
\\\\
& =(-2\lambda^2 a + 1)\varepsilon^4 (||\tilde{m}_1||_0^2+||\tilde{m}_2||_0^2+||\tilde{\xi}_1||_0^2+||\tilde{\xi}_3||_0^2)\\\\
&-2\lambda^2 a \varepsilon^4 (||\tilde{m}_3||_0^2 + ||\tilde{\xi}_2||_0^2)
\\\\
& + 2a(2a-1)\varepsilon^6 (||\tilde{k}||_0^2+||\tilde{h}||_0^2)+J_1+J_2+J_3+J_4+J_5+J_6.
\end{align*}
Now, taking a positive constant $\omega$ and by using the Cauchy inequality, we have
\begin{align*}
& J_1=2\varepsilon^4(\tilde{\xi}_3, \tilde{m}_2)_0 \le \varepsilon^4 (||\tilde{\xi}_3||_0^2+||\tilde{m}_2||_0^2);\\\\
& J_2=(4a-2)\varepsilon^5[(\tilde{k}_2, \tilde{m}_1)_0+(\tilde{k}_1, \tilde{m}_2)_0] \le (1-2a)\Bigg\{\frac{\varepsilon^6}{\omega}(||\tilde{k}_2||_0^2+||\tilde{k}_1||_0^2) + \varepsilon^4 \omega (||\tilde{m}_1||_0^2 + ||\tilde{m}_2||_0^2)\Bigg\};\\\\
&J_3=4a\varepsilon^5 [(\tilde{h}_2, \tilde{m}_1)_0+(\tilde{h}_1, \tilde{m}_2)_0] \le 2a \Bigg\{ \frac{\varepsilon^6}{\omega}||\tilde{h}_2||_0^2+\varepsilon^4 \omega||\tilde{m}_1||_0^2+\frac{\varepsilon^6}{\omega}||\tilde{h}_1||_0^2 + \varepsilon^4 \omega ||\tilde{m}_2||_0^2 \Bigg\};\\\\
&J_4=4a\varepsilon^5 [(\tilde{k}_3, \tilde{\xi}_1)_0+(\tilde{k}_1, \tilde{\xi}_3)_0] \le 2a \Bigg\{ \frac{\varepsilon^6}{\omega}||\tilde{k}_3||_0^2+\varepsilon^4 \omega ||\tilde{\xi}_1||_0^2 + \frac{\varepsilon^6}{\omega}||\tilde{k}_1||_0^2+\varepsilon^4 \omega ||\tilde{\xi}_3||_0^2 \Bigg\};\\\\
&J_5=2(2a-1)\varepsilon^5[(\tilde{h}_3, \tilde{\xi}_1)_0+(\tilde{h}_1, \tilde{\xi}_3)_0] \le (1-2a) \Bigg\{ \frac{\varepsilon^6}{\omega}||\tilde{h}_3||_0^2+\varepsilon^4 \omega ||\tilde{\xi}_1||_0^2 + \frac{\varepsilon^6}{\omega}||\tilde{h}_1||_0^2+\varepsilon^4 \omega ||\tilde{\xi}_3||_0^2 \Bigg\};\\\\
&J_6=8a^2\varepsilon^6(\tilde{h}, \tilde{k})_0 \le 4a^2 \varepsilon^6 \{{||\tilde{h}||_0^2}+||\tilde{k}||_0^2 \}.
\end{align*}

Putting them all together, we have
\begin{equation}
\label{last_negative_source}
\begin{aligned}
&\tau(-L\Sigma \tilde{W}, \tilde{W})_0 \le \varepsilon^4||\tilde{m}_1||_0^2 [-2\lambda^2 a + 1 +\omega] + \varepsilon^4 ||\tilde{m}_2||_0^2 [-2\lambda^2 a +2+\omega]
-2\lambda^2 a \varepsilon^4 ||\tilde{m}_3||_0^2\\\\
&+ \varepsilon^4 ||\tilde{\xi}_1||_0^2 [-2 \lambda^2 a + 1 + \omega]-2\lambda^2 a \varepsilon^4 ||\tilde{\xi}_2||_0^2
+ \varepsilon^4 ||\tilde{\xi}_3||_0^2 [-2\lambda^2 a + 2 + \omega] + \varepsilon^6 ||\tilde{k}_1||_0^2 \Bigg[2a(4a-1)+\frac{1}{\omega}\Bigg]\\\\
&+ \varepsilon^6 ||\tilde{k}_2||_0^2 \Bigg[2a(4a-1)+\frac{(1-2a)}{\omega}\Bigg] + \varepsilon^6 ||\tilde{k}_3||_0^2 \Bigg[2a(4a-1)+\frac{2a}{\omega}\Bigg] + \varepsilon^6 ||\tilde{h}_1||_0^2 \Bigg[2a(4a-1)+\frac{1}{\omega}\Bigg]\\\\
&+\varepsilon^6||\tilde{h}_2||_0^2\Bigg[2a(4a-1)+\frac{2a}{\omega}\Bigg]+\varepsilon^6||\tilde{h}_3||_0^2 \Bigg[2a(4a-1)+\frac{(1-2a)}{\omega}\Bigg].
\end{aligned}
\end{equation}

This way, we obtain the following Lemma.

\begin{lemma}
\label{Negative_Source}
If Assumption \ref{a_condition_BGK} is satisfied and $\lambda$ is big enough, then the symmetrized linear part of the source term $-L\Sigma$ given by (\ref{LSigma}) is negative definite.
\begin{proof}
We need $\omega$ and $\lambda$ satisfying:
\begin{equation}
\label{conditions_negativity}
\begin{cases}
\omega > \frac{1}{2a(1-4a)}; \\\\
\lambda > \sqrt{\frac{2+\omega}{2a}}.
\end{cases}
\end{equation}
Recalling (\ref{Sigma_conditions}), we assume 
\begin{equation}
\label{lambda_condition_BGK}
\lambda > \max\Bigg\{\sqrt{\frac{\delta}{a}}, \, \sqrt{\frac{4a(1-4a)+1}{4a^2(1-4a)}}\Bigg\}.
\end{equation}
Then, we take $\omega > \frac{1}{2a(1-4a)},$ which ends the proof.
\end{proof}
\end{lemma}

\section{Energy estimates}
\label{Energy_estimates}
Here we provide $\varepsilon$-weighted energy estimates for the solution $W^\varepsilon$ to (\ref{BGK_NS_compact_new_variables_2}). Let us introduce $T^\varepsilon$ the maximal time of existence of the unique solution $\tilde{W}^\varepsilon$ for fixed $\varepsilon$ to system (\ref{BGK_compact_Sigma}),  see \cite{Majda}. In the following, we consider the time interval $[0,T],$ with $T \in [0, T^\varepsilon)$. Our setting is represented by the Sobolev spaces $H^s(\mathbb{T}^2),$ with $s>3.$
\subsection{Zero order estimate}
We assume the following condition.
\begin{assumption}
\label{lambda_assumption_BGK}
Let $\lambda$ satisfies (\ref{lambda_condition_BGK}) and 
\begin{equation}
\label{lambda_condition_estimate1_BGK}
\lambda > \sqrt{\frac{5+\frac{1}{a(1-4a)}}{4a}}.
\end{equation}
\end{assumption}
\begin{lemma}
If Assumptions \ref{a_condition_BGK} and \ref{lambda_assumption_BGK} are satisfied,
then the following zero order energy estimate holds:
\begin{equation*}
||\tilde{w}(T)||_0^2 +  \varepsilon^6 (||\tilde{m}(T)||_0^2 + ||\tilde{\xi}(T)||_0^2) + \varepsilon^8 (||\tilde{k}(T)||_0^2+||\tilde{h}(T)||_0^2)
\end{equation*}
\begin{equation*}
+ \int_0^T  \varepsilon^4 (||\tilde{m}(t)||_0^2+||\tilde{\xi}(t)||_0^2) + \varepsilon^6 (||\tilde{k}(t)||_0^2+||\tilde{h}(t)||_0^2) ~ dt \le c\varepsilon^2 (||\textbf{u}_0||_0^2 + || \nabla \textbf{u}_0 ||_0^2)
\end{equation*}
\begin{equation}
\label{zero_order_estimate_lemma_BGK}
+ c(||\textbf{u}||_{L^\infty([0,T]\times\mathbb{T}^2)}) \int_0^T ||\tilde{w}(t)||_0^2 +\varepsilon^6(||\tilde{m}(t)||_0^2 + ||\tilde{\xi}(t)||_0^2) + \varepsilon^8 (||\tilde{k}(t)||_0^2+||\tilde{h}(t)||_0^2) ~ dt.
\end{equation} 
\end{lemma}

\begin{proof}
We consider the symmetrized compact system (\ref{BGK_compact_Sigma}) and we multiply $\tilde{W}$ through the $L^2$-scalar product. Thus, we have:
\begin{equation*}
\frac{1}{2}\frac{d}{dt}(\Sigma \tilde{W}, \tilde{W})_0 + (L\Sigma \tilde{W}, \tilde{W})_0 = (N((\Sigma \tilde{W})_1+\bar{w}), \tilde{W})_0.
\end{equation*}
Integrating in time, we get:
$$\frac{1}{2}(\Sigma \tilde{W}(T), \tilde{W}(T))_0 + \int_0^T (L\Sigma \tilde{W}(t), \tilde{W}(t))_0 ~ dt  \le \frac{1}{2}(\Sigma \tilde{W}(0), \tilde{W}(0))_0$$
\begin{equation}
\label{time_integral_BGK}
+ \int_0^T  |(N((\Sigma\tilde{W}(t))_1+\bar{w}), \tilde{W}(t))_0| ~ dt.
\end{equation}

Consider (\ref{last_positive_symmetrizer_BGK}) and let us introduce the following positive constants:
\begin{equation}
\label{functional_Sigma}
\begin{array}{lll}
\Gamma_{\Sigma}:=1-4a\mu -\frac{2}{\delta}, & \Delta_\Sigma:=2(\lambda^2 a - \delta) &
\Theta_\Sigma:=2a(1-\frac{1}{\mu})-\frac{1}{\delta}.
\end{array}
\end{equation}

Similarly, from (\ref{last_negative_source}), we define:
\begin{equation}
\label{functional_source_BGK}
\begin{array}{ll}
\Delta_{L\Sigma}:=2(\lambda^2a-1)-\omega, & \Theta_{L\Sigma}:=2a(1-4a)-\frac{1}{\omega}.
\end{array}
\end{equation}

Thus,  from (\ref{time_integral_BGK}), we get:
\begin{equation*}
\Gamma_\Sigma ||\tilde{w}(T)||_0^2 + \varepsilon^6\Delta_\Sigma  (||\tilde{m}(T)||_0^2 + ||\tilde{\xi}(T)||_0^2) + \varepsilon^8 \Theta_\Sigma  (||\tilde{k}(T)||_0^2+||\tilde{h}(T)||_0^2) 
\end{equation*}
\begin{equation*}
+ \frac{2}{\tau} \int_0^T  \varepsilon^4\Delta_{L\Sigma} (||\tilde{m}(t)||_0^2+||\tilde{\xi}(t)||_0^2) +  \varepsilon^6 \Theta_{L\Sigma} (||\tilde{k}(t)||_0^2+||\tilde{h}(t)||_0^2) ~ dt
\end{equation*}
\begin{equation}
\label{time_integral_2_BGK}
\le (\Sigma \tilde{W}_0, \tilde{W}_0)_0+2\int_0^T  |(N((\Sigma\tilde{W}(t))_1+\bar{w}), \tilde{W}(t))_0| ~ dt. 
\end{equation}

Notice that, from (\ref{change_Sigma}), 
\begin{equation*}
(\Sigma \tilde{W}_0, \tilde{W}_0)_0=(\Sigma \Sigma^{-1}W_0, \Sigma^{-1}W_0)_0 = (\Sigma^{-1} W_0, W_0)_0,
\end{equation*}
where $ \displaystyle W_0=W(0,x)=(w(0,x), \varepsilon^2{m}(0,x), \varepsilon^2 {\xi}(0,x), \varepsilon^2{k}(0,x), \varepsilon^2 {h}(0,x)), $ and, from (\ref{variables_BGK}), (\ref{def_translation_BGK}) and the initial conditions (\ref{initial_conditions_BGK}),
\begin{equation*}
\begin{array}{l}
w(0,x)=w_0-\bar{w}=(0, \varepsilon \bar{\rho} {u_0}_1, \varepsilon \bar{\rho} {u_0}_2); \\ 
m(0,x)=\frac{\lambda}{\varepsilon}({f_1}_0-{f_3}_0)=\frac{A_1(w_0)}{\varepsilon}-2a \lambda^2 \tau \partial_x w_0\\
\qquad \quad \,=(\bar{\rho}{u_0}_1, \varepsilon \bar{\rho} {{u_0}_1}^2-2a \varepsilon \bar{\rho} \partial_x {u_0}_1, \varepsilon \bar{\rho} {{u_0}_1}{u_0}_2-2a \varepsilon \bar{\rho} \partial_x {u_0}_2); \\
\xi(0,x)=\frac{\lambda}{\varepsilon}({f_2}_0-{f_4}_0)=\frac{A_2(w_0)}{\varepsilon}-2a \lambda^2 \tau \partial_y w_0\\
\qquad \quad \,=(\bar{\rho}{u_0}_2, \varepsilon \bar{\rho} {{u_0}_1}{u_0}_2-2a \varepsilon \bar{\rho} \partial_y {u_0}_1, \varepsilon \bar{\rho}{{u_0}_2}^2-2a \varepsilon \bar{\rho} \partial_y {u_0}_2 ); \\
k(0,x)={f_1}_0+{f_3}_0-2a\bar{w}=2aw_0-2a\bar{w}=2a(0, \varepsilon \bar{\rho} {u_0}_1, \varepsilon \bar{\rho} {u_0}_2); \\ 
h(0,x)={f_2}_0+{f_4}_0-2a\bar{w}=2aw_0-2a\bar{w}=2a(0, \varepsilon \bar{\rho} {u_0}_1, \varepsilon \bar{\rho} {u_0}_2). \\ 
\end{array}
\end{equation*}

Besides, the explicit expression of the constant symmetric matrix $\Sigma^{-1}$ is given by
\begin{equation}
\label{Inv_Sigma}
\Sigma^{-1}=\left(\begin{array}{ccccc}
\frac{1}{1-4a}Id & 0 & 0 & \frac{-1}{\varepsilon^2(1-4a)}Id & \frac{-1}{\varepsilon^2(1-4a)}Id \\
0 & H_1 & 0 & \frac{1}{\varepsilon^3(1-4\lambda^2 a^2)}\sigma_1 & 0 \\
0 & 0 & H_2 & 0 & \frac{1}{\varepsilon^3(1-4\lambda^2 a^2)}\sigma_2 \\
\frac{-1}{\varepsilon^2(1-4a)}Id &  \frac{1}{\varepsilon^3(1-4\lambda^2 a^2)}\sigma_1 & 0 & H_3 & \frac{1}{\varepsilon^4(1-4a)}Id \\
\frac{-1}{\varepsilon^2(1-4a)}Id & 0 &  \frac{1}{\varepsilon^3(1-4\lambda^2 a^2)}\sigma_2 & \frac{1}{\varepsilon^4(1-4a)}Id & H_4 \\
\end{array}\right),
\end{equation}

where
\begin{equation*}
H_1=\left(\begin{array}{ccc}
\frac{2a}{\varepsilon^2 (4\lambda^2 a^2 -1)} & 0 & 0 \\
0 & \frac{2a}{\varepsilon^2 (4\lambda^2 a^2 -1)} & 0 \\
0 & 0 & \frac{1}{2\lambda^2 a \varepsilon^2} \\
\end{array}\right); ~~~ H_2=\left(\begin{array}{ccc}
\frac{2a}{\varepsilon^2 (4\lambda^2 a^2 -1)} & 0 & 0 \\
0 & \frac{1}{2\lambda^2 a \varepsilon^2} & 0 \\
0 & 0 & \frac{2a}{\varepsilon^2 (4\lambda^2 a^2 -1)} \\
\end{array}\right);
\end{equation*}
\begin{equation*}
H_3=\left(\begin{array}{ccc}
\frac{4\lambda^2 a^2 - 2 \lambda^2 a +1}{\varepsilon^4 (4a-1)(4\lambda^2 a^2-1)} & 0 & 0 \\
0 & \frac{4\lambda^2 a^2 - 2 \lambda^2 a +1}{\varepsilon^4 (4a-1)(4\lambda^2 a^2-1)} & 0 \\
0 & 0 & \frac{2a-1}{2a\varepsilon^4(4a-1)}\\
\end{array}\right);
\end{equation*} 
\begin{equation*}
H_4=\left(\begin{array}{ccc}
\frac{4\lambda^2 a^2 - 2 \lambda^2 a +1}{\varepsilon^4 (4a-1)(4\lambda^2 a^2-1)} & 0 & 0 \\
0 & \frac{2a-1}{2a\varepsilon^4(4a-1)} & 0 \\
0 & 0 & \frac{4\lambda^2 a^2 - 2 \lambda^2 a +1}{\varepsilon^4 (4a-1)(4\lambda^2 a^2-1)} \\
\end{array}\right).
\end{equation*}

It is easy to check that
\begin{align*}
(\Sigma^{-1}W_0, W_0)_0 & = \bar{\rho}^2 \varepsilon^2 \|\textbf{u}_0\|_0^2 + \frac{2a\bar{\rho}^2\varepsilon^4}{4\lambda^2 a^2-1} (\|{u_{0}}_1^2-2a \lambda^2 \partial_x {u_0}_1\|_0^2+\|{u_{0}}_2^2-2a \lambda^2 \partial_y {u_0}_2\|_0^2) \\
& +\frac{\bar{\rho}^2\varepsilon^4}{2 \lambda^2 a}(\|{u_0}_1{u_0}_2-2a \lambda^2 \partial_x {u_0}_2\|_0^2+\|{u_0}_1{u_0}_2-2a \lambda^2 \partial_y {u_0}_1\|_0^2) \\
& \le c \varepsilon^2 (\|\textbf{u}_0\|_0^2 + \|\nabla \textbf{u}_0 \|_0^2),
\end{align*}
and so, from (\ref{time_integral_2_BGK}) we get the following inequality:
\begin{equation}
\label{time_integral_3_BGK}
\begin{array}{c}
\Gamma_\Sigma ||\tilde{w}(T)||_0^2 + \varepsilon^6\Delta_\Sigma  (||\tilde{m}(T)||_0^2 + ||\tilde{\xi}(T)||_0^2) +  \varepsilon^8\Theta_\Sigma (||\tilde{k}(T)||_0^2+||\tilde{h}(T)||_0^2) 
\\\\

+  \displaystyle \frac{2}{\tau} {\int_0^T}  \varepsilon^4\Delta_{L\Sigma}  (||\tilde{m}(t)||_0^2+||\tilde{\xi}(t)||_0^2) + \varepsilon^6\Theta_{L\Sigma}(||\tilde{k}(t)||_0^2+||\tilde{h}(t)||_0^2) ~ dt \\\\

\le c \varepsilon^2 (\|\textbf{u}_0\|_0^2 + \|\nabla \textbf{u}_0 \|_0^2) + 2\displaystyle{\int_0^T}   |(N((\Sigma\tilde{W}(t))_1+\bar{w}), \tilde{W}(t))_0| ~ dt. 
\end{array}
\end{equation}
It remains to deal with the last term of (\ref{time_integral_3_BGK}). Recall that $w=(\rho-\bar{\rho}, \varepsilon \rho {u}_1, \varepsilon \rho u_2)$. From (\ref{source_term_NS}),
\begin{equation}
\label{N_source_BGK}
N((\Sigma \tilde{W})_1+\bar{w})=N(w+\bar{w})=
\frac{1}{\tau}\left(\begin{array}{c}
0 \\ \\
\left(\begin{array}{c}
0 \\
u_1 w_2 \\
u_1 w_3 \\
\end{array}\right) \\ \\
\left(\begin{array}{c}
0 \\
u_2 w_2 \\
u_2 w_3 \\
\end{array}\right) \\ \\
0 \\
0
\end{array}\right).
\end{equation}

Thus,
\begin{equation*}
(N(w+\bar{w}), \tilde{W})_0=\frac{1}{\tau}\{(u_1 w_2, \varepsilon^2 \tilde{m}_2)_0+(u_1 w_3, \varepsilon^2 \tilde{m}_3)_0+(u_2 w_2, \varepsilon^2 \tilde{\xi}_2)_0+(u_2 w_3, \varepsilon^2 \tilde{\xi}_3)_0\}
\end{equation*}
\begin{equation*}
\le \frac{1}{2\tau}\{||u_1w_2||_0^2+\varepsilon^4 ||\tilde{m}_2||_0^2 + ||u_1w_3||_0^2 + \varepsilon^4 ||\tilde{m}_3||_0^2+|| u_2w_2||_0^2 + \varepsilon^4 ||\tilde{\xi}_2||_0^2 + ||u_2w_3||_0^2 + \varepsilon^4 ||\tilde{\xi}_3||_0^2\}
\end{equation*}
\begin{equation*}
\le c(||\textbf{u}||_\infty) ||w||_0^2 + \frac{\varepsilon^4}{2\tau}(||\tilde{m}||_0^2+||\tilde{\xi}||_0^2).
\end{equation*}

By definition (\ref{change_Sigma}), explicitly we have:
\begin{equation}
\label{w_star_w_star_tilde}
w=(\Sigma \tilde{W}^\varepsilon)_1=\tilde{w}+\varepsilon^3 \sigma_1 \tilde{m} + \varepsilon^3 \sigma_2 \tilde{\xi} + 2a\varepsilon^4 (\tilde{k}+\tilde{h}), 
\end{equation}
and so,
\begin{equation*}
|(N(w+\bar{w}), \tilde{W})_0| \le c(||\textbf{u}||_{\infty}) \{ ||\tilde{w}||_0^2  + \varepsilon^6(||\tilde{m}||_0^2 + ||\tilde{\xi}||_0^2) + \varepsilon^8 (||\tilde{k}||_0^2+||\tilde{h}||_0^2) \} +
\frac{\varepsilon^4}{2\tau} (||\tilde{m}||_0^2 + ||\tilde{\xi}||_0^2).
\end{equation*}

Putting them all together, (\ref{time_integral_3_BGK}) yields:
\begin{equation}
\label{time_integral_4}
\begin{array}{c}

\Gamma_\Sigma ||\tilde{w}(T)||_0^2 +  \varepsilon^6\Delta_\Sigma (||\tilde{m}(T)||_0^2 + ||\tilde{\xi}(T)||_0^2) + \varepsilon^8 \Theta_\Sigma  (||\tilde{k}(T)||_0^2+||\tilde{h}(T)||_0^2) \\\\

+  \displaystyle \frac{2}{\tau}{\int_0^T}   \varepsilon^4\Delta_{L\Sigma}  (||\tilde{m}(t)||_0^2+||\tilde{\xi}(t)||_0^2) + \varepsilon^6\Theta_{L\Sigma} (||\tilde{k}(t)||_0^2+||\tilde{h}(t)||_0^2) ~ dt

\\\\
\le c \varepsilon^2 (\|\textbf{u}_0\|_0^2 + \|\nabla \textbf{u}_0 \|_0^2) +  \displaystyle{\int_0^T} {\frac{\varepsilon^4}{\tau}} (||\tilde{m}(t)||_0^2 + ||\tilde{\xi}(t)||_0^2 ) ~ dt
\\\\

+ c(||\textbf{u}||_{L^\infty([0,T]\times\mathbb{T}^2)}) \displaystyle{\int_0^T} ||\tilde{w}(t)||_0^2 +\varepsilon^6(||\tilde{m}(t)||_0^2 + ||\tilde{\xi}(t)||_0^2) + \varepsilon^8 (||\tilde{k}(t)||_0^2+||\tilde{h}(t)||_0^2) ~ dt.
\end{array}
\end{equation}

This gives:
\begin{equation}
\label{zero_order_estimate_BGK}
\begin{array}{c}

\Gamma_\Sigma ||\tilde{w}(T)||_0^2 +  \varepsilon^6\Delta_\Sigma (||\tilde{m}(T)||_0^2 + ||\tilde{\xi}(T)||_0^2) + \varepsilon^8 \Theta_\Sigma  (||\tilde{k}(T)||_0^2+||\tilde{h}(T)||_0^2) 

\\\\
+  \displaystyle \frac{1}{\tau}{\int_0^T}   \varepsilon^4 (2{\Delta}_{L\Sigma}-1)  (||\tilde{m}(t)||_0^2+||\tilde{\xi}(t)||_0^2) + 2\varepsilon^6 \Theta_{L\Sigma} (||\tilde{k}(t)||_0^2+||\tilde{h}(t)||_0^2)  ~ dt 

\\\\
\le c \varepsilon^2 (\|\textbf{u}_0\|_0^2 + \|\nabla \textbf{u}_0 \|_0^2)

\\\\\
+ c(||\textbf{u}||_{L^\infty([0,T]\times\mathbb{T}^2)}) \displaystyle{\int_0^T} ||\tilde{w}(t)||_0^2 +\varepsilon^6(||\tilde{m}(t)||_0^2 + ||\tilde{\xi}(t)||_0^2) + \varepsilon^8 (||\tilde{k}(t)||_0^2+||\tilde{h}(t)||_0^2) ~ dt,
\end{array}
\end{equation} 
where, by definition (\ref{functional_source_BGK}), $2\Delta_{L\Sigma}-1=4\lambda^2 a - 4 - 2\omega$ is positive thanks to condition (\ref{lambda_condition_estimate1_BGK}). This gives estimate (\ref{zero_order_estimate_lemma_BGK}).
\end{proof}

\subsection{Higher order estimates}
\begin{lemma}
If Assumptions \ref{a_condition_BGK} and \ref{lambda_assumption_BGK} are satisfied, then the following $H^s$ energy estimate holds:
\begin{equation*}
\begin{array}{c}
|| \tilde{w}(T)||_s^2 + \varepsilon^6 (||\tilde{m}(T)||_s^2 + ||\tilde{\xi}(T)||_s^2) +  \varepsilon^8 (||\tilde{k}(T)||_s^2+||\tilde{h}(T)||_s^2) 
\\\\

+  \displaystyle{\int_0^T}  \varepsilon^4  (||\tilde{m}(t)||_s^2+||\tilde{\xi}(t)||_s^2) + \varepsilon^6 (||\tilde{k}(t)||_s^2+||\tilde{h}(t)||_s^2) ~ dt \le c\varepsilon^2 (||\textbf{u}_0||_s^2+|| \nabla \textbf{u}_0 ||_s^2)
\\\\

+ c(||\textbf{u}\,||_{L_t^\infty H_x^s}) \displaystyle{\int_0^T}||\tilde{w}(t)||_s^2+\varepsilon^6(||\tilde{m}(t)||_s^2+||\tilde{\xi}(t)||_s^2)+\varepsilon^8(||\tilde{k}(t)||_s^2+||\tilde{h}(t)||_s^2) ~ dt,
\end{array}
\end{equation*}
where, hereafter,
$$\displaystyle {L_t^\infty H_x^{s}}:=L^\infty([0,T], H^s(\mathbb{T}^2)), \quad \text{for} \,\, s \in \mathbb{R}.$$
\end{lemma}

\begin{proof}
We take the $|\alpha|$-derivative, $0 < |\alpha | \le s$, of the semilinear system given by (\ref{BGK_NS_compact_new_variables_2}). As done previously, we get:
\begin{equation*}
\Gamma_{\Sigma} ||D^\alpha \tilde{w}(T)||_0^2 + \varepsilon^6 \Delta_\Sigma  (||D^\alpha\tilde{m}(T)||_0^2 + ||D^\alpha\tilde{\xi}(T)||_0^2) + \varepsilon^8 \Theta_\Sigma  (||D^\alpha\tilde{k}(T)||_0^2+||D^\alpha\tilde{h}(T)||_0^2) 
\end{equation*}
\begin{equation*}
+ \frac{2}{\tau} \int_0^T  \varepsilon^4 \Delta_{L\Sigma} (||D^\alpha\tilde{m}(t)||_0^2+||D^\alpha\tilde{\xi}(t)||_0^2) +  \varepsilon^6 \Theta_{L\Sigma} (||D^\alpha\tilde{k}(t)||_0^2+||D^\alpha\tilde{h}(t)||_0^2) ~ dt
\end{equation*}
\begin{equation}
\label{time_integral_1_derivative_BGK}
\le c\varepsilon^2 (||D^\alpha\textbf{u}_0||_0^2+||D^{\alpha+1} \textbf{u}_0||_0^2) + 2\int_0^T  |D^\alpha (N((\Sigma\tilde{W}(t))_1+\bar{w}), D^\alpha \tilde{W}(t))_0| ~ dt.
\end{equation}

Now, from (\ref{N_source_BGK}), 
\begin{equation*}
|(D^\alpha N(w + \bar{w}), D^\alpha \tilde{W})_0| \le \frac{1}{\tau}\{|(D^\alpha(u_1 w_2), D^\alpha
\varepsilon^2 \tilde{m}_2)_0| + |D^\alpha (u_1 w_3), D^\alpha \varepsilon^2 \tilde{m}_3)_0|
\end{equation*}
\begin{equation*}
+|(D^\alpha (u_2 w_2), D^\alpha \varepsilon^2\tilde{\xi}_2)_0|+|D^\alpha (u_2 w_3), D^\alpha \varepsilon^2 \tilde{\xi}_3)_0|\}
\end{equation*}
\begin{equation*}
\le \frac{1}{2\tau} \{||(D^\alpha(u_1 w_2)||_0^2 + ||D^\alpha (u_1 w_3)||_0^2 + ||D^\alpha (u_2 w_2)||_0^2 + ||D^\alpha (u_2 w_3)||_0^2) +  {\varepsilon^4} (||D^\alpha \tilde{m}||_0^2 + ||D^\alpha \tilde{\xi}||_0^2)\}
\end{equation*}
\begin{equation*}
\le c(||\textbf{u}||_s) ||w||_s^2 + \frac{\varepsilon^4}{2\tau} (||\tilde{m}||_s^2+||\tilde{\xi}||_s^2).
\end{equation*}

By using (\ref{w_star_w_star_tilde}) we have:
\begin{equation*}
|(D^\alpha N(w + \bar{w}), D^\alpha \tilde{W})_0| 
\quad \le {c}(||\textbf{u}||_s)(||\tilde{w}||_s^2+\varepsilon^6(||\tilde{m}||_s^2+||\tilde{\xi}||_s^2)+\varepsilon^8(||\tilde{k}||_s^2+||\tilde{h}||_s^2))  
\end{equation*}
\begin{equation*}
+ \frac{\varepsilon^4}{2\tau} (||\tilde{m}||_s^2+||\tilde{\xi}||_s^2).
\end{equation*}

Thus, from (\ref{time_integral_1_derivative_BGK}), 
\begin{equation*}
\Gamma_\Sigma || \tilde{w}(T)||_s^2 + \varepsilon^6 \Delta_\Sigma (||\tilde{m}(T)||_s^2 + ||\tilde{\xi}(T)||_s^2) +  \varepsilon^8 \Theta_\Sigma (||\tilde{k}(T)||_s^2+||\tilde{h}(T)||_s^2) 
\end{equation*}
\begin{equation*}
+ \frac{2}{\tau}  \int_0^T  \varepsilon^4 (2\Delta_{L\Sigma}-1) (||\tilde{m}(t)||_s^2+||\tilde{\xi}(t)||_s^2) + 2\varepsilon^6 \Theta_{L\Sigma} (||\tilde{k}(t)||_s^2+||\tilde{h}(t)||_s^2) ~ dt
\end{equation*}
$$ \le c\varepsilon^2 (||\textbf{u}_0||_s^2+||\nabla \textbf{u}_0||_s^2)$$
\begin{equation*}
 + c(||\textbf{u}||_{L_t^\infty H_x^s})\int_0^T ||\tilde{w}(t)||_s^2+\varepsilon^6(||\tilde{m}(t)||_s^2+||\tilde{\xi}(t)||_s^2)+\varepsilon^8(||\tilde{k}(t)||_s^2+||\tilde{h}(t)||_s^2) ~ dt.
\end{equation*}
\end{proof}

\begin{remark}
In the case $s>3$ is not an integer, by using the pseudodifferential operator $\displaystyle \lambda^s(\xi)=(1+|\xi|^2)^{s/2}$ in the Fourier space, we get the same estimates in a standard way.
\end{remark}

Now, we need a bound in the $H^s$-norm for the original variable $w=(\rho-\bar{\rho}, \varepsilon \rho \textbf{u}),$ which is the first component of the unknown vector $W$ in (\ref{W_variables_NS}). By using estimate (\ref{time_integral_1_derivative_BGK}) and definition (\ref{change_Sigma}), we can prove the following proposition.

\begin{proposition}
\label{New_constants_BGK}
If Assumptions \ref{a_condition_BGK} and \ref{lambda_assumption_BGK} are satisfied, then the following estimate holds:
\begin{equation*}
|| {w}(t)||_s^2 + \varepsilon^6 (||\tilde{m}(t)||_s^2 + ||\tilde{\xi}(t)||_s^2)+ \varepsilon^8 (||\tilde{k}(t)||_s^2+||\tilde{h}(t)||_s^2) \le c \varepsilon^2 (||\textbf{u}_0||_s^2+||\nabla \textbf{u}_0||_s^2) e^{c(||\textbf{u}||_{L_t^\infty H_x^s})t},
\end{equation*}
and 
\begin{equation}
\label{Gronwall1_proposition_BGK}
\frac{||\rho(t)-\bar{\rho}||_s^2}{\varepsilon^2}+||\rho \textbf{u}(t)||_s^2 \le c(||\textbf{u}_0||_s^2+||\nabla \textbf{u}_0||_s^2)e^{c(||\textbf{u}||_{L_t^\infty H_x^s})t},
\end{equation}
for $t \in [0,T^\varepsilon)$.
\end{proposition}

\begin{proof}
The Gronwall inequality applied to (\ref{time_integral_1_derivative_BGK}) yields:
\begin{equation}
\label{Gronwall_BGK}
\begin{aligned}
 \Gamma_\Sigma || \tilde{w}(t)||_s^2 & + \varepsilon^6 \Delta_\Sigma (||\tilde{m}(t)||_s^2 + ||\tilde{\xi}(t)||_s^2) +  \varepsilon^8 \Theta_\Sigma (||\tilde{k}(t)||_s^2+||\tilde{h}(t)||_s^2) \\\\
& \le c\varepsilon^2 (||\textbf{u}_0||_s^2+||\nabla \textbf{u}_0 ||_s^2) e^{c(||\textbf{u}||_{L_t^\infty H_x^s})t}.
\end{aligned}
\end{equation}

Recalling (\ref{w_star_w_star_tilde}),
\begin{equation*}
\tilde{w}=w-\varepsilon^3\sigma_1 \tilde{m} - \varepsilon^3 \sigma_2 \tilde{\xi} - 2a \varepsilon^4 (\tilde{k}+\tilde{h}).
\end{equation*}
Thus,
\begin{equation}
\label{back_BGK}
\begin{aligned}
||\tilde{w}||_s^2 & = ||w||_s^2 + \varepsilon^6 (||\tilde{m}_1||_s^2 + ||\tilde{m}_2||_s^2 + ||\tilde{\xi}_1||_s^2 + ||\tilde{\xi}_3||_s^2) + 4a^2 \varepsilon^8 ||\tilde{k}+\tilde{h}||_s^2\\\\
& -2\varepsilon^3(w, \sigma_1 \tilde{m})_s - 2\varepsilon^3 (w, \sigma_2 \tilde{\xi})_s-4a\varepsilon^4(w, \tilde{k}+\tilde{h})_s+2\varepsilon^6 (\sigma_1 \tilde{m}, \sigma_2 \tilde{\xi})_s \\\\
&+4a\varepsilon^7 (\sigma_1 \tilde{m}, \tilde{k}+\tilde{h})_s+4a\varepsilon^7 (\sigma_2 \tilde{\xi}, \tilde{k}+\tilde{h})_s\\\\
&= ||w||_s^2 + \varepsilon^6 (||\tilde{m}_1||_s^2 + ||\tilde{m}_2||_s^2 + ||\tilde{\xi}_1||_s^2 + ||\tilde{\xi}_3||_s^2) + 4a^2 \varepsilon^8 ||\tilde{k}+\tilde{h}||_s^2\\\\
&+Y_1+Y_2+Y_3+Y_4+Y_5+Y_6.
\end{aligned}
\end{equation}

Now, taking two positive constants $\eta, \zeta$ and using the Cauchy inequality, from (\ref{back_BGK}) we have:
\begin{equation*}
\begin{array}{l}
\begin{split}
Y_1=-2\varepsilon^3 (w, \sigma_1\tilde{m})_s
\ge -\frac{||w_1||_s^2}{\eta}-\varepsilon^6 \eta  ||\tilde{m}_2||_s^2-\frac{||w_2||_s^2}{\eta}-\varepsilon^6 \eta ||\tilde{m}_1||_s^2;
\end{split} \\\\
Y_2=-2\varepsilon^3(w, \sigma_2\tilde{\xi})_s \ge -\frac{||w_1||_s^2}{\eta}-\varepsilon^6 \eta ||\tilde{\xi}_3||_s^2-\frac{||w_3||_s^2}{\eta}-\varepsilon^6 \eta ||\tilde{\xi}_1||_s^2; \\\\
Y_3=-4a\varepsilon^4(w, \tilde{k}+\tilde{h})_s \ge \frac{-2a}{\zeta}||w||_s^2 - 2a\zeta \varepsilon^8 ||\tilde{k}+\tilde{h}||_s^2; \\\\
Y_4=2 \varepsilon^6 (\tilde{m}_2, \tilde{\xi}_3)_s \ge - \varepsilon^6(||\tilde{m}_2^\varepsilon||_s^2+||\tilde{\xi}_3^\varepsilon||_s^2); \\\\
\begin{split}
Y_5=4a\varepsilon^7[(\tilde{m}_2, \tilde{k}_1+\tilde{h}_1)_s+(\tilde{m}_1, \tilde{k}_2+\tilde{h}_2)_s] \ge -2a \varepsilon^6 \eta ||\tilde{m}_2||_s^2 - \frac{2a \varepsilon^8}{\eta} ||\tilde{k}_1+\tilde{h}_1||_s^2\\ -2a \varepsilon^6 \eta ||\tilde{m}_1||_s^2 - \frac{2a \varepsilon^8}{\eta} ||\tilde{k}_2+\tilde{h}_2||_s^2; 
\end{split}
\\ \\
\begin{split}
Y_6=4a\varepsilon^7[(\tilde{\xi}_3, \tilde{k}_1+\tilde{h}_1)_s+(\tilde{\xi}_1, \tilde{k}_3+\tilde{h}_3)_s] \ge -2a \varepsilon^6 \eta ||\tilde{\xi}_3||_s^2 - \frac{2a\varepsilon^8}{\eta}||\tilde{k}_1+\tilde{h}_1||_s^2\\ -2a \varepsilon^6 \eta ||\tilde{\xi}_1||_s^2 
- \frac{2a\varepsilon^8}{\eta}||\tilde{k}_3+\tilde{h}_3||_s^2.
\end{split}
\end{array}
\end{equation*}
The left hand side of (\ref{Gronwall_BGK}) and the previous calculations yield the following inequality:
\begin{equation}
\label{last_w_star}
\begin{aligned}
\Gamma_\Sigma || \tilde{w}||_s^2 &+ \varepsilon^6 \Delta_\Sigma (||\tilde{m}||_s^2 + ||\tilde{\xi}||_s^2)+ \varepsilon^8 \Theta_\Sigma (||\tilde{k}||_s^2+||\tilde{h}||_s^2) 
\ge  \Gamma_\Sigma \Bigg[1-\frac{2}{\eta}-\frac{2a}{\zeta}\Bigg] ||w_1||_s^2 \\\\
&+ \Gamma_\Sigma \Bigg[1-\frac{1}{\eta}-\frac{2a}{\zeta}\Bigg](||w_2||_s^2+||w_3||_s^2) 
+  \varepsilon^8\theta_\Sigma (||\tilde{k}||_s^2+||\tilde{h}||_s^2)  \\\\ 
&+ \varepsilon^6 (||\tilde{m}_1||_s^2+||\tilde{\xi}_1||_s^2) [\Delta_\Sigma + \Gamma_\Sigma (1-\eta-2a\eta)]\\\\ 
&+ \varepsilon^6 (||\tilde{m}_2||_s^2+||\tilde{\xi}_3||_s^2) [\Delta_\Sigma + \Gamma_\Sigma (-\eta-2a\eta)] 
\\\\ 
&+ \varepsilon^6 (||\tilde{m}_3||_s^2+||\tilde{\xi}_2||_s^2) \Delta_\Sigma
+\varepsilon^8||\tilde{k}_1+\tilde{h}_1||_s^2 \Gamma_\Sigma \Bigg[4a^2-2a\zeta-\frac{4a}{\eta}\Bigg]  \\\\
&+\varepsilon^8 \Gamma_\Sigma (||\tilde{k}_2+\tilde{h}_2||_s^2+||\tilde{k}_3+\tilde{h}_3||_s^2) \Bigg[4a^2-2a\zeta-\frac{2a}{\eta}\Bigg].
\end{aligned}
\end{equation}

Fixed $\beta > 1$, the Cauchy inequality yields $||\tilde{k}+\tilde{h}||_s^2 \ge (1-\frac{1}{\beta}) ||\tilde{k}||_s^2 + (1-\beta) ||\tilde{h}||_s^2,$ then the last term of (\ref{last_w_star}) is bounded from below by the following expression:

$$\varepsilon^8(||\tilde{k}_1||_s^2+||\tilde{h}_1||_s^2)\Bigg[\Theta_\Sigma + (1-{1}/{\beta})\Gamma_\Sigma \Bigg[4a^2-2a\zeta-\frac{4a}{\eta}\Bigg]\Bigg]$$

\begin{equation}
\label{wwstar_inequality}
+\varepsilon^8 (||\tilde{k}_2||_s^2+||\tilde{k}_3||_s^2+||\tilde{h}_2||_s^2+||\tilde{h}_3||_s^2)\Bigg[\Theta_\Sigma + (1-\beta) {\Gamma_\Sigma} \Bigg[4a^2-2a\zeta-\frac{2a}{\eta}\Bigg]\Bigg].
\end{equation}

Thus, in order to get estimate (\ref{Gronwall1_proposition_BGK}), we require:
\begin{equation}
\label{wwstar_conditions}
\begin{cases}
1-\frac{2}{\eta}-\frac{4a}{\zeta} > 0; \\
\Delta_\Sigma-\eta \Gamma_\Sigma (1+2a) > 0; \\
\Theta_\Sigma + (1-1/\beta){\Gamma_\Sigma}\Bigg[4a^2-2a\zeta-\frac{4a}{\eta}\Bigg] > 0; \\
\Theta_\Sigma +(1-\beta) {\Gamma_\Sigma} \Bigg[4a^2-2a\zeta-\frac{4a}{\eta}\Bigg] > 0. \\
\end{cases}
\end{equation}
Recalling definition (\ref{functional_Sigma}), $\Delta_\Sigma=2(\lambda^2 a - \delta)$, and so the second inequality is satisfied for $\lambda$ big enough. Precisely, we take $\lambda $ as in Assumption \ref{lambda_assumption_BGK} and
\begin{equation*}
\lambda>\sqrt{\frac{\delta}{a}+\frac{\eta \Gamma_\Sigma (1+2a)}{2a}}.
\end{equation*}
Moreover, the first condition of (\ref{wwstar_conditions}) is verified if 
\begin{equation}
\label{zeta_BGK}
\boxed{\eta > \frac{2\zeta}{\zeta - 4a}, \,\,\,\,\, \zeta > 4a}.
\end{equation}
Since $\Theta_\Sigma$ and $\Gamma_\Sigma$ are positive, taking $1-\beta<0$, i.e. $\beta >1$, the last inequality is verified if $$2a\zeta + \frac{4a}{\eta} - 4a^2 >0.$$ From (\ref{zeta_BGK}), 
$$2a\zeta + \frac{4a}{\eta} - 4a^2 > 8a^2 + \frac{4a}{\eta} - 4a^2 = 4a^2 + \frac{4a}{\eta} > 0,$$ then the last inequality in (\ref{wwstar_conditions}) holds under (\ref{zeta_BGK}). Now, the third condition in (\ref{wwstar_conditions}) is satisfied if 
$$ {\zeta < \frac{\Theta_\Sigma}{2a\Gamma_\Sigma (1-1/\beta)}+2(a-{1}/{\eta})}.$$
Thus, if $\displaystyle \boxed{\eta>\frac{1}{a}},$ we can take
\begin{equation}
\label{wwstar_conditions_verified}
4a < \zeta < \frac{\Theta_\Sigma}{2a\Gamma_\Sigma (1-1/\beta)},
\end{equation}
with $\eta$ and $\zeta$ satisfying (\ref{zeta_BGK}). In particular, we show that there exists $\beta>1$ such that:
\begin{equation}
\label{beta_condition_BGK}
4a<\frac{\Theta_\Sigma}{2a\Gamma_\Sigma (1-1/\beta)}, \,\,\,\,\, \text{i.e.} \,\,\,\,\, 8a^2\Gamma_\Sigma (1-1/\beta)<\Theta_\Sigma. 
\end{equation}
From (\ref{functional_Sigma}), $\Gamma_\Sigma=1-4a\mu -\frac{2}{\delta}$ and, from Lemma \ref{Sigma_positivity_lemma}, $0<\Gamma_\Sigma < 1$. Thus, in order to verify (\ref{beta_condition_BGK}), we require:
\begin{equation*}
8a^2(1-1/\beta)<\Theta_\Sigma, 
\end{equation*}
which is automatically verified if $\displaystyle 8a^2\le\Theta_\Sigma.$ Otherwise, it yields
$\displaystyle  {\beta < \frac{8a^2}{8a^2-\Theta_\Sigma}}.$
\\ Finally, since $\beta>1$, we need
$$1< \frac{8a^2}{8a^2-\Theta_\Sigma},\,\,\,\,\, \text{i.e.} \,\,\,\,\, \Theta_\Sigma > 0,$$
which is already satisfied thanks to Lemma \ref{Sigma_positivity_lemma}.

This way, from (\ref{wwstar_inequality}), (\ref{Gronwall_BGK}) and (\ref{wwstar_conditions}), we get some positive constants $\Gamma_\Sigma^1, \Delta_\Sigma^1, \Theta_\Sigma^1$ such that
$$\Gamma_\Sigma^1 || {w}(t)||_s^2 + \varepsilon^6 \Delta_\Sigma^1 (||\tilde{m}(t)||_s^2 + ||\tilde{\xi}(t)||_s^2)+ \varepsilon^8 \Theta_\Sigma^1 (||\tilde{k}(t)||_s^2+||\tilde{h}(t)||_s^2)$$
\begin{equation}
\label{Gronwall1_BGK}
 \le c \varepsilon^2 (||\textbf{u}_0||_s^2+||\nabla \textbf{u}_0||_s^2) e^{c(||\textbf{u}||_{L_t^\infty H_x^s})t},
\end{equation}

and, in particular,
\begin{equation*}
|| {w}(t)||_s^2 \le c \varepsilon^2 (||\textbf{u}_0||_s^2+||\nabla \textbf{u}_0||_s^2) e^{c(||\textbf{u}||_{L_t^\infty H_x^s})t},
\end{equation*}
i.e.
\begin{equation}
\label{s_estimate_BGK}
\frac{||\rho(t)-\bar{\rho}||_s^2}{\varepsilon^2}+||\rho \textbf{u}(t)||_s^2 \le c(||\textbf{u}_0||_s^2+||\nabla \textbf{u}_0||_s^2)e^{c(||\textbf{u}||_{L_t^\infty H_x^s})t}.
\end{equation}
\end{proof}

Thus, we are able to prove that the time $T^\varepsilon$ of existence of the solutions to the vector BGK scheme is bounded form below by a positive time $T^\star$, which is independent of $\varepsilon$.
\begin{proposition}
There exist $\varepsilon_0$ and $T^\star$ fixed such that $T^\star < T^\varepsilon$ for all $\varepsilon \le \varepsilon_0$.  This also yields, for $\varepsilon \le \varepsilon_0$, the uniform bounds:
\begin{equation}
\label{bound1_BGK}
||\textbf{u}(t)||_s \le M, \,\,\,\,\,\,\, t \in [0,T^\star],
\end{equation}
\begin{equation}
\label{bound2_BGK}
||\rho(t)-\bar{\rho}||_s \le \varepsilon M, \,\,\,\,\, i.e. \,\,\,\,\, ||\rho(t)||_s \le \bar{\rho}|\mathbb{T}^2| + \varepsilon M, \,\,\,\,\,\,\, t \in [0,T^\star],
\end{equation}
and
\begin{equation}
\label{bound3_BGK}
||\rho\textbf{u}(t)||_s \le M(\bar{\rho}|\mathbb{T}^2| + \varepsilon M), \,\,\,\,\,\,\, t \in [0,T^\star].
\end{equation}
\end{proposition}

\begin{proof}
Let $\textbf{u}_0 \in H^{s+1}(\mathbb{T}^2)$ and, from (\ref{initial_conditions_BGK}), recall that $\rho_0=\bar{\rho}.$ Then, there exists a positive constant $M_0$ such that $||\textbf{u}_0||_{s+1}\le M_0$, and 
\begin{equation}
\label{M0_BGK}
||\rho_0 \textbf{u}_0||_{s+1} = \bar{\rho} ||\textbf{u}_0||_{s+1} \le \bar{\rho} M_0=:\tilde{M}_0.
\end{equation}
Let $M > \tilde{M}_0$ be any fixed constant, and
\begin{equation}
\label{hp_uniform_bound_BGK}
T_0^\varepsilon:=\sup \Bigg\{t \in [0,T^\varepsilon]  \, \Bigg| \, \frac{||\rho(t)-\bar{\rho}||_s^2}{\varepsilon^2} + ||\rho \textbf{u}(t)||_s^2 \le M^2, \,\,\ \forall \varepsilon \le \varepsilon_0 \,\,\, \Bigg\}.
\end{equation}
Notice that, from (\ref{hp_uniform_bound_BGK}), 
\begin{equation*}
 ||\rho - \bar{\rho}||_\infty \le c_S  ||\rho - \bar{\rho}||_s \le c_S M \varepsilon , \,\, \,\, t \in [0, T_0^\varepsilon],
\end{equation*}
where $c_S$ is the Sobolev embedding constant, i.e.
$$\bar{\rho} - c_S M \varepsilon \le \rho \le \bar{\rho}+ c_S M\varepsilon, \,\, \,\, t \in [0, T_0^\varepsilon].$$
Taking $\varepsilon_0$ such that $\bar{\rho}-c_S M\varepsilon_0 > \frac{\bar{\rho}}{2}$, i.e. $\bar{\rho} > 2c_SM\varepsilon_0$, we have
\begin{equation}
\label{rho_rho_bar_2_BGK}
\rho > \frac{\bar{\rho}}{2}, \,\,\,\,\, t \in [0,T_0^\varepsilon].
\end{equation}

Now, since $s > 3=\frac{d}{2}+2$, 
$$||\textbf{u}||_s \le ||\rho \textbf{u}||_s  ||1/\rho||_s. $$ Moreover,
$$||1/\rho||_s \le c\Bigg(\frac{|\mathbb{T}^2|}{\bar{\rho}} + \frac{||\rho||_s}{c(\bar{\rho})}\Bigg) \le c_1 + c_2 ||\rho||_s. $$

From (\ref{hp_uniform_bound_BGK}), $$||\rho||_s \le c(|\mathbb{T}^2|\bar{\rho}+M\varepsilon),$$ so
$$||1/\rho||_s \le c_1 + c_2M\varepsilon,$$
and 
$$||\textbf{u}||_s \le cM (c_1+c_2M\varepsilon).$$

From (\ref{s_estimate_BGK}),
$$\frac{||\rho(t)-\bar{\rho}||_s^2}{\varepsilon^2}+||\rho \textbf{u}(t)||_s^2 \le cM_0^2 e^{c(M(c_1+c_2M\varepsilon))t}, \,\,\,\,\, t \in [0, T_0^\varepsilon].$$

We take $T^\star \le T_0^\varepsilon$ such that
$$cM_0^2 e^{c(M(c_1+c_2M\varepsilon_0))T^\star} \le M^2,$$
i.e.

\begin{equation}
\label{T_star_BGK}
T^\star \le \frac{1}{{c}(M(c_1+c_2M\varepsilon_0))}log(M^2/(cM_0^2)) \,\,\,\,\, \forall \varepsilon \le \varepsilon_0.
\end{equation}
This way,
\begin{equation}
\label{bound12_BGK}
||\textbf{u}(t)||_s \le  cM (c_1+c_2M\varepsilon), \,\,\,\,\,\,\, t \in [0,T^\star] \,\,\,\,\, \text{and} \,\,\,\,\, ||\rho \textbf{u}||_s \le M \,\,\,\,\, \forall \varepsilon \le \varepsilon_0.
\end{equation}
\end{proof}

\subsection{Time derivative estimate}
In order to use the compactness tools, we need a uniform bound for the time derivative of the unknown vector field.
\begin{proposition}
If Assumptions \ref{a_condition_BGK} and \ref{lambda_assumption_BGK} hold, for $M_0$ in (\ref{M0_BGK}) and $M$ in (\ref{bound1_BGK}), we have:
\begin{equation}
\label{gronwall_time_derivative_BGK}
\begin{array}{c}
|| \partial_t {w}||_{s-1}^2 + \varepsilon^6 (||\partial_t\tilde{m}||_{s-1}^2 + ||\partial_t\tilde{\xi}||_{s-1}^2) + \varepsilon^8 (||\partial_t\tilde{k}||_{s-1}^2+||\partial_t\tilde{h}||_{s-1}^2) \\\\
\le  \varepsilon^2 c(||\textbf{u}_0||_{s+1}) e^{c(M)t} \le \varepsilon^2 c(M_0,M) \,\,\,\,\, \text{in} \,\,\,\,  [0,T^\star],
\end{array}
\end{equation} 
with $T^\star$ in (\ref{T_star_BGK}). This also yields the uniform bound:
\begin{equation}
\label{bound_time_derivative}
\frac{||\partial_t(\rho-\bar{\rho})||_{s-1}^2}{\varepsilon^2}+||\partial_t(\rho \textbf{u})||_{s-1}^2 \le c(||\textbf{u}_0||_{s+1}) \le M^2 \,\,\,\,\, \text{in} \,\,\,\,  [0,T^\star].
\end{equation}
\end{proposition}

\begin{proof}
Let us take the time derivative of system (\ref{BGK_compact_Sigma}). Defining $\tilde{V}=\partial_t \tilde{W}^\varepsilon$, from (\ref{source_term_NS}) we get:
\begin{equation}
\label{BGK_compact_time_derivative}
\partial_t \Sigma \tilde{V} + \tilde{\Lambda}_1 \Sigma \partial_x \tilde{V} + \tilde{\Lambda}_2 \Sigma \partial_y \tilde{V} = -L\Sigma \tilde{V} + \partial_t N((\Sigma \tilde{W})_1+\bar{w})=-L\Sigma \tilde{V} +\partial_t N(w+\bar{w}),
\end{equation}
where 
\begin{equation}
\label{time_derivative_source_term}
\partial_t N(w+\bar{w})=\frac{1}{\tau}\left(\begin{array}{c}
0 \\
\left(\begin{array}{c}
0 \\
2 u_1 \partial_t w_2 - \varepsilon u_1^2 \partial_t w_1 \\
u_2 \partial_t w_2 + u_1 \partial_t w_3 - \varepsilon u_1 u_2 \partial_t w_1 \\
\end{array}\right) \\
\left(\begin{array}{c}
0 \\
u_2 \partial_t w_2 + u_1 \partial_t w_3 - \varepsilon u_1 u_2 \partial_t w_1 \\ 
2u_2 \partial_t w_3 -\varepsilon u_2^2 \partial_t w_1 \\
\end{array}\right) \\
0 \\
0 \\
\end{array}\right).
\end{equation}
Taking the scalar product with $\tilde{V}$, we have:
\begin{equation}
\label{time_derivative_1}
\frac{1}{2}\frac{d}{dt}(\Sigma \tilde{V}, \tilde{V})_0 + (L\Sigma \tilde{V}, \tilde{V})_0 \le |(\partial_t N(w+\bar{w}), V)_0|.
\end{equation}
Here,
\begin{equation*}
\begin{array}{c}
\displaystyle  |(\partial_t N(w+\bar{w}), \tilde{V})_0| = \frac{1}{\tau}|(2 u_1 \partial_t w_2 - \varepsilon u_1^2 \partial_t w_1, \varepsilon^2 \partial_t \tilde{m}_2)_0 
\\\\ \displaystyle + (u_2 \partial_t w_2 + u_1 \partial_t w_3 - \varepsilon u_1 u_2 \partial_t w_1, \varepsilon^2 \partial_t \tilde{m}_3 + \varepsilon^2 \partial_t \tilde{\xi}_2)_0
\\\\ \displaystyle+(2u_2 \partial_t w_3 -\varepsilon u_2^2 \partial_t w_1, \varepsilon^2 \partial_t \tilde{\xi}_3)_0|
\le c(||\textbf{u}||_\infty) ||\partial_t w||_0^2 + \frac{\varepsilon^4}{2\tau} (||\partial_t \tilde{m}||_0^2+||\partial_t \tilde{\xi}||_0^2).
\end{array}
\end{equation*}

Similarly to (\ref{zero_order_estimate_BGK}), we get:
\begin{equation*}
\begin{array}{c}
\Gamma_\Sigma ||\partial_t \tilde{w}||_0^2 + \varepsilon^6 \Delta_\Sigma (||\partial_t \tilde{m}||_0^2 + |||\partial_t \tilde{\xi}||_0^2) + \varepsilon^8 \Theta_\Sigma (||\partial_t \tilde{k}||_0^2+||\partial_t \tilde{h}||_0^2) 
\\\\ \displaystyle
+ \frac{1}{\tau} \int_0^T   (2\Delta_{L\Sigma}-1) \varepsilon^4 (||\partial_t\tilde{m}||_0^2+||\partial_t\tilde{\xi}||_0^2) +  2\varepsilon^6 \Theta_{L\Sigma} (||\partial_t\tilde{k}||_0^2+||\partial_t\tilde{h}||_0^2) ~ dt \\\\
\le c\varepsilon^2 ||\partial_t w|_{t=0}||_0^2 
\displaystyle \\\\ \displaystyle
+ c(||\textbf{u}||_{L^\infty([0,T]\times\mathbb{T}^2)}) \int_0^T ||\partial_t\tilde{w}||_0^2 +\varepsilon^6(||\partial_t\tilde{m}||_0^2 + ||\partial_t\tilde{\xi}||_0^2) + \varepsilon^8 (||\partial_t\tilde{k}||_0^2+||\partial_t\tilde{h}||_0^2) ~ dt.
\end{array}
\end{equation*} 

Now, from the first equation given by (\ref{BGK_NS_new_variables_translated}), 
\begin{equation*}
\partial_t w|_{t=0} = - \partial_x m|_{t=0}-\partial_y \xi|_{t=0},
\end{equation*}
where, from (\ref{variables_BGK}), (\ref{initial_conditions_BGK}), and (\ref{Fluxes_BGK}),
\begin{equation*}
m|_{t=0}=\frac{A_1(w_0)}{\varepsilon}-2a \lambda^2  \tau \partial_x w_0=\bar{\rho}\left(\begin{array}{c}
{u_0}_1 \\
\varepsilon {u_0}_1^2-2a \lambda^2 \varepsilon \partial_x {u_0}_1 \\
\varepsilon {u_0}_1{u_0}_2-2a \lambda^2 \varepsilon \partial_x {u_0}_2 \\
\end{array}\right),
\end{equation*}
\begin{equation*}
\xi|_{t=0}=\frac{A_2(w_0)}{\varepsilon}-2a \lambda^2 \tau \partial_y w_0=\bar{\rho}\left(\begin{array}{c}
{u_0}_2 \\
\varepsilon {u_0}_1{u_0}_2-2a \lambda^2 \tau \varepsilon \partial_y {u_0}_1 \\
\varepsilon {u_0}_2^2-2a \lambda^2 \tau \varepsilon \partial_y {u_0}_2 \\
\end{array}\right).
\end{equation*}
By definition of $w$ in (\ref{w_BGK}), $\displaystyle \partial_tw|_{t=0}=(\partial_t\rho|_{t=0}, \varepsilon \partial_t(\rho \textbf{u})|_{t=0}).$ This implies that
$$\partial_t \rho|_{t=0} = - {\bar{\rho}} (\nabla \cdot \textbf{u}_0) = 0,$$ since $\textbf{u}_0$ is divergence free. This way,
\begin{equation*}
\partial_t \textbf{u}|_{t=0} = - \partial_x \left(\begin{array}{c}
 {u_0}_1^2-2a \lambda^2 \tau \partial_x {u_0}_1 \\
 {u_0}_1{u_0}_2-2a \lambda^2 \tau \partial_x {u_0}_2  \\
\end{array}\right) - \partial_y \left(\begin{array}{c}
 {u_0}_1{u_0}_2-2a \lambda^2 \tau \partial_y {u_0}_1  \\
 {u_0}_2^2-2a \lambda^2 \tau \partial_y {u_0}_2 \\
\end{array}\right).
\end{equation*}

Thus, 
\begin{equation*}
\begin{array}{c}
\Gamma_\Sigma ||\partial_t \tilde{w}||_0^2 +  \varepsilon^6 \Delta_\Sigma (||\partial_t \tilde{m}||_0^2 + |||\partial_t \tilde{\xi}||_0^2) + \varepsilon^8 \Theta_\Sigma (||\partial_t \tilde{k}||_0^2+||\partial_t \tilde{h}||_0^2) \\ \\

+ \displaystyle \frac{1}{\tau}\int_0^T (2\Delta_{L\Sigma}-1)   \varepsilon^4 (||\partial_t \tilde{m}||_0^2+||\partial_t \tilde{\xi}||_0^2) + 2 \varepsilon^6 \Theta_{L\Sigma} (||\partial_t \tilde{k}||_0^2+||\partial_t {h}||_0^2) ~ dt \\ \\

\le c\varepsilon^2(||\textbf{u}_0||_0^2 + ||\nabla \textbf{u}_0||_0^2+|| \nabla^2 \textbf{u}_0||_0^2) \\\\

+ c(M) \displaystyle{\int_0^T}  ||\partial_t \tilde{w}||_0^2 +\varepsilon^6(||\partial_t \tilde{m}||_0^2 + ||\partial_t \tilde{\xi}||_0^2) + \varepsilon^8 (||\partial_t \tilde{k}||_0^2+||\partial_t \tilde{h}||_0^2) ~ dt,
\end{array}
\end{equation*} 
where the last inequality follows form the Sobolev embedding theorem and from (\ref{bound1_BGK}).

Similarly, taking the $|\alpha|$-derivative, for $|\alpha| \le s-1,$ of (\ref{BGK_compact_time_derivative}) and multiplying by $D^\alpha \tilde{V}$ through the scalar product, we get:
\begin{equation*}
\frac{1}{2}\frac{d}{dt}(\Sigma D^\alpha \tilde{V}, D^\alpha \tilde{V})_0 + (L\Sigma D^\alpha \tilde{V}, D^\alpha \tilde{V})_0 \le |(D^\alpha \partial_t N(w+\bar{w}), D^\alpha V)_0|,
\end{equation*}
where
\begin{equation*}
\begin{array}{c}
\displaystyle  |(D^\alpha \partial_t N(w+\bar{w}), D^\alpha \tilde{V})_0| = \frac{1}{\tau}|(D^\alpha (2 u_1 \partial_t w_2 - \varepsilon u_1^2 \partial_t w_1), \varepsilon^2 \partial_t D^\alpha \tilde{m}_2)_0
\\\\ \displaystyle + (D^\alpha (u_2 \partial_t w_2 + u_1 \partial_t w_3 - \varepsilon u_1 u_2 \partial_t w_1), \varepsilon^2 \partial_t D^\alpha \tilde{m}_3 + \varepsilon^2 \partial_t D^\alpha \tilde{\xi}_2)_0
\\\\  \displaystyle+(D^\alpha (2u_2 \partial_t w_3 -\varepsilon u_2^2 \partial_t w_1), \varepsilon^2 \partial_t D^\alpha \tilde{\xi}_3)_0|
\\\\  \displaystyle\le c(||\textbf{u}||_{s-1}) ||\partial_t w||_{s-1}^2 + \frac{\varepsilon^4}{2\tau} (||\partial_t \tilde{m}||_{s-1}^2+||\partial_t \tilde{\xi}||_{s-1}^2) 
\\\\ \displaystyle \le c(M) ||\partial_t w||_{s-1}^2 + \frac{\varepsilon^4}{2\tau} (||\partial_t \tilde{m}||_{s-1}^2+||\partial_t \tilde{\xi}||_{s-1}^2),
\end{array}
\end{equation*}

where the last inequality follows from (\ref{bound1_BGK}). Finally, we obtain:
\begin{equation}
\label{higher_order_estimate_time_derivative1}
\begin{array}{c}
\Gamma_\Sigma ||\partial_t \tilde{w}||_{s-1}^2 +  \varepsilon^6  \Delta_\Sigma (||\partial_t \tilde{m}||_{s-1}^2 + ||\partial_t \tilde{\xi}||_{s-1}^2) + \varepsilon^8 \Theta_\Sigma (|| \partial_t \tilde{k}||_{s-1}^2+||\partial_t \tilde{h}||_{s-1}^2) \\\\

+ \displaystyle  \frac{1}{\tau} {\int_0^T}   (2\Delta_{L\Sigma}-1) \varepsilon^4 (||\partial_t \tilde{m}||_{s-1}^2+||\partial_t \tilde{\xi}||_{s-1}^2) + \varepsilon^6 \Theta_{L\Sigma} (||\partial_t \tilde{k}||_{s-1}^2+||\partial_t \tilde{h}||_{s-1}^2) ~ dt \\\\

\le c\varepsilon^2 (||\textbf{u}_0||_{s-1}^2 + ||\nabla \textbf{u}_0||_{s-1}^2 + ||\nabla^2 \textbf{u}_0||_{s-1}^2 ) 

\\\\ + c(M) \displaystyle{\int_0^T}  ||\partial_t\tilde{w}||_{s-1}^2 +\varepsilon^6(||\partial_t\tilde{m}||_{s-1}^2 + ||\partial_t\tilde{\xi}||_{s-1}^2) + \varepsilon^8 (||\partial_t\tilde{k}||_{s-1}^2+||\partial_t\tilde{h}||_{s-1}^2) ~ dt.
\end{array}
\end{equation} 

\begin{lemma}
\label{New_constants_time_derivative}
If Assumption \ref{a_condition_BGK} and \ref{lambda_assumption_BGK} hold, then there exists a positive constant $c$ such that:
\begin{equation*}
\begin{array}{c}
||\partial_t {w}||_{s-1}^2 
\le c(||\partial_t \tilde{w}||_{s-1}^2 +  \varepsilon^6  (||\partial_t \tilde{m}||_{s-1}^2 + ||\partial_t \tilde{\xi}||_{s-1}^2)
+ \varepsilon^8 (|| \partial_t \tilde{k}||_{s-1}^2+||\partial_t \tilde{h}||_{s-1}^2)).
\end{array}
\end{equation*}
\end{lemma}
\begin{proof}
The proof of Proposition \ref{New_constants_BGK} can be adapted here with slight modifications.
\end{proof}
We end the proof by applying the Gronwall inequality to (\ref{higher_order_estimate_time_derivative1}) and using Lemma \ref{New_constants_time_derivative}.
\end{proof}

\section{Convergence to the Navier-Stokes equations}
\label{Convergence_NS}
Now we state our main result.
\begin{theorem}
\label{Main_Theorem}
Let $s>3$. If Assumptions \ref{a_condition_BGK} and \ref{lambda_assumption_BGK} hold, there exists a subsequence $W^\varepsilon=(w^{\varepsilon \, \star}, \varepsilon^2 m^\varepsilon, \varepsilon^2 \xi^\varepsilon, \varepsilon^2 k^{\varepsilon \, \star}, \varepsilon^2 h^{\varepsilon \, \star}),$ with $w^{\varepsilon \, \star}=(\rho^\varepsilon-\bar{\rho}, \varepsilon \rho^\varepsilon \textbf{u}^\varepsilon)$ and $\bar{\rho}>0$, of the solutions to the vector BGK model (\ref{BGK_NS_compact_new_variables}) with  initial data (\ref{initial_conditions_compact_new_variables_NS}) and $\textbf{u}_0 \in H^{s+1}(\mathbb{T}^2)$ in (\ref{real_NS_initial_data}), such that 
$$(\rho^\varepsilon, \textbf{u}^\varepsilon) \rightarrow  (\bar{\rho}, \textbf{u}^{NS}) \,\,\,\, \text{in} \,\,\,\, C([0,T^\star], {H^s}'(\mathbb{T}^2)),$$ with $T^\star$ in (\ref{T_star_BGK}), $ s-1 < s' < s $,  and where $\textbf{u}^{NS}$ is the unique solution to the Navier-Stokes equations in (\ref{real_NS}),
with initial data $\textbf{u}_0$ above and $P^{NS}$ the incompressible pressure. Moreover, 
$$\frac{\nabla (\rho^\varepsilon-\bar{\rho})}{\varepsilon^2} \rightharpoonup^\star \nabla P^{NS} \,\,\,\,\, \text{in} \,\,\,\,\, {L_t^\infty H_x^{s-3}}.$$
\end{theorem}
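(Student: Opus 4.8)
\emph{Overview and compactness.} The plan is to extract a candidate limit by compactness, then pass to the limit in a ``closed'' form of the BGK system obtained by eliminating the fast variables $m^\varepsilon,\xi^\varepsilon,k^{\varepsilon\,\star},h^{\varepsilon\,\star}$. By Proposition~\ref{Time_bound} the family $\{\textbf{u}^\varepsilon\}$ is bounded in $L^\infty([0,T^\star],H^s(\mathbb{T}^2))$, $\|\rho^\varepsilon-\bar\rho\|_s\le\varepsilon M$, and $\{\rho^\varepsilon\textbf{u}^\varepsilon\}$ is bounded in $L^\infty([0,T^\star],H^s(\mathbb{T}^2))$; by Proposition~\ref{time_derivative_lemma}, $\partial_t(\rho^\varepsilon\textbf{u}^\varepsilon)$ and $\varepsilon^{-1}\partial_t(\rho^\varepsilon-\bar\rho)$ are bounded in $L^\infty([0,T^\star],H^{s-1}(\mathbb{T}^2))$, which, together with $\rho^\varepsilon>\bar\rho/2$ and $\|1/\rho^\varepsilon\|_s\le c$, yields a uniform $H^{s-1}$ bound on $\partial_t\textbf{u}^\varepsilon=\rho^{-1}\partial_t(\rho^\varepsilon\textbf{u}^\varepsilon)-\rho^{-2}(\rho^\varepsilon\textbf{u}^\varepsilon)\partial_t\rho^\varepsilon$. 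Since $H^s\hookrightarrow\hookrightarrow H^{s'}\hookrightarrow H^{s-1}$ for $s-1<s'<s$, the Aubin--Lions--Simon lemma gives, along a subsequence, $\textbf{u}^\varepsilon\to\textbf{u}^{NS}$ in $C([0,T^\star],H^{s'}(\mathbb{T}^2))$, while $\rho^\varepsilon\to\bar\rho$ in $C([0,T^\star],H^s(\mathbb{T}^2))$ trivially; moreover $\varepsilon^{-1}(\rho^\varepsilon-\bar\rho)$ is bounded in $L^\infty([0,T^\star],H^s)$.

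\emph{Closing the system.} Set $w:=w^{\varepsilon\,\star}=(\rho^\varepsilon-\bar\rho,\varepsilon\rho^\varepsilon u_1^\varepsilon,\varepsilon\rho^\varepsilon u_2^\varepsilon)$. From the $m^\varepsilon$-- and $k^{\varepsilon\,\star}$--equations of (\ref{BGK_NS_new_variables_translated}) one gets $m^\varepsilon=\varepsilon^{-1}A_1(w+\bar w)-\tau\varepsilon^2\partial_t m^\varepsilon-\tau\lambda^2\partial_x k^{\varepsilon\,\star}$ and $k^{\varepsilon\,\star}-2aw=-\tau\varepsilon^2(\partial_t k^{\varepsilon\,\star}+\partial_x m^\varepsilon)$; substituting and using $2a\tau\lambda^2=\nu$ from (\ref{a_def}) yields $m^\varepsilon=\varepsilon^{-1}A_1(w+\bar w)-\nu\,\partial_x w+\varepsilon^2 R_1^\varepsilon$, and symmetrically $\xi^\varepsilon=\varepsilon^{-1}A_2(w+\bar w)-\nu\,\partial_y w+\varepsilon^2 R_2^\varepsilon$, where $R_1^\varepsilon,R_2^\varepsilon$ are explicit combinations of $\partial_t m^\varepsilon,\partial_t k^{\varepsilon\,\star},\partial_x^2 m^\varepsilon,\partial_x\partial_t k^{\varepsilon\,\star}$. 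Since $A_1(w+\bar w)_{(0)}=\varepsilon\rho^\varepsilon u_1^\varepsilon$ and $A_1(w+\bar w)_{(1)}=\varepsilon^2\rho^\varepsilon(u_1^\varepsilon)^2+(\rho^\varepsilon-\bar\rho)$, inserting these into $\partial_t w+\partial_x m^\varepsilon+\partial_y\xi^\varepsilon=0$ gives, for the first scalar component, $\partial_t(\rho^\varepsilon-\bar\rho)+\nabla\cdot(\rho^\varepsilon\textbf{u}^\varepsilon)=O(\varepsilon)$ in a negative Sobolev norm, and, for the two momentum components divided by $\varepsilon$, $\partial_t(\rho^\varepsilon\textbf{u}^\varepsilon)+\nabla\cdot(\rho^\varepsilon\textbf{u}^\varepsilon\otimes\textbf{u}^\varepsilon)+\nabla\!\big(\varepsilon^{-2}(\rho^\varepsilon-\bar\rho)\big)-\nu\Delta(\rho^\varepsilon\textbf{u}^\varepsilon)=\varepsilon\,\mathcal{R}^\varepsilon$, with $\mathcal{R}^\varepsilon$ bounded in $L^\infty([0,T^\star],H^{s-3})$ by Propositions~\ref{New_constants}--\ref{time_derivative_lemma}.

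\emph{Passing to the limit.} From the first identity and $\|\partial_t(\rho^\varepsilon-\bar\rho)\|_{s-1}\le\varepsilon M\to0$ one gets $\nabla\cdot(\bar\rho\,\textbf{u}^{NS})=0$, i.e. $\nabla\cdot\textbf{u}^{NS}=0$. In the momentum identity every term except the pressure one is bounded in $L^\infty([0,T^\star],H^{s-3})$, so $\nabla\big(\varepsilon^{-2}(\rho^\varepsilon-\bar\rho)\big)$ is bounded there and, along a further subsequence, converges $\rightharpoonup^\star$ to a field which, being a limit of exact gradients on $\mathbb{T}^2$, equals $\nabla P^{NS}$ for some function $P^{NS}$. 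Using the strong convergences $\textbf{u}^\varepsilon\to\textbf{u}^{NS}$, $\rho^\varepsilon\to\bar\rho$ in $C([0,T^\star],H^{s'})$ (so the quadratic terms converge in $C([0,T^\star],H^{s'})$, $H^{s'}$ being an algebra since $s'>1=D/2$) and the distributional convergence of $\partial_t(\rho^\varepsilon\textbf{u}^\varepsilon)$, $\nu\Delta(\rho^\varepsilon\textbf{u}^\varepsilon)$, one lets $\varepsilon\to0$ and divides by $\bar\rho$ to recover (\ref{real_NS}) for $(\textbf{u}^{NS},\bar\rho^{-1}P^{NS})$. Since $\textbf{u}^\varepsilon(0,\cdot)=\textbf{u}_0$ (because $\rho^\varepsilon(0,\cdot)=\bar\rho$ by (\ref{initial_conditions})) and the convergence holds in $C([0,T^\star],H^{s'})$, we get $\textbf{u}^{NS}(0,\cdot)=\textbf{u}_0$; uniqueness of the smooth solution to the incompressible Navier--Stokes equations in $H^{s'}(\mathbb{T}^2)$ then identifies $\textbf{u}^{NS}$, and the $\rightharpoonup^\star$ convergence of $\varepsilon^{-2}\nabla(\rho^\varepsilon-\bar\rho)$ in $L^\infty([0,T^\star],H^{s-3})$ is exactly what has been established.

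\emph{Main obstacle.} The delicate part is the control of the Chapman--Enskog remainders $R_1^\varepsilon,R_2^\varepsilon$, which contain time derivatives and second-order space derivatives of the fast variables $m^\varepsilon,k^{\varepsilon\,\star}$: one must bound $\varepsilon\,\mathcal{R}^\varepsilon$ in a fixed Sobolev space \emph{after} the whole momentum equation has been divided by $\varepsilon$. This forces a careful combination of the uniform $H^s$ bounds of Proposition~\ref{New_constants} with the $H^{s-1}$ time-derivative bounds of Proposition~\ref{time_derivative_lemma}, together with a bootstrap that bounds $m^\varepsilon,\xi^\varepsilon$ directly through the identities above rather than through the weighted variables $\tilde W$ (which only control them with lossy $\varepsilon$-weights); the precise number of derivatives lost in this step is what produces the exponent $s-3$ in the convergence of the pressure. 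A minor additional point is that the pressure is defined only up to an additive constant, which is why one argues throughout with $\nabla\big(\varepsilon^{-2}(\rho^\varepsilon-\bar\rho)\big)$ rather than with $\varepsilon^{-2}(\rho^\varepsilon-\bar\rho)$ itself.
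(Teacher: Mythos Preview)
Your proposal is correct and follows essentially the same route as the paper: extract a limit via Aubin--Lions using Propositions~\ref{Time_bound} and~\ref{time_derivative_lemma}, perform a Chapman--Enskog elimination of $m^\varepsilon,\xi^\varepsilon,k^{\varepsilon\,\star},h^{\varepsilon\,\star}$ to close the $w$-equation up to $O(\varepsilon^2)$ remainders, and pass to the limit in $H^{s-3}$ to recover (\ref{real_NS}) and the divergence-free condition. Two minor presentational differences: the paper applies compactness to $\rho^\varepsilon\textbf{u}^\varepsilon$ and then divides by $\rho^\varepsilon$, whereas you bound $\partial_t\textbf{u}^\varepsilon$ directly; and the paper does not spell out the uniqueness argument identifying $\textbf{u}^\star$ with $\textbf{u}^{NS}$, which you correctly add.

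One point to correct in your ``Main obstacle'' paragraph: your claim that the weighted variables $\tilde W$ ``only control $m^\varepsilon,\xi^\varepsilon$ with lossy $\varepsilon$-weights'' is the opposite of what happens in the paper. The paper's control of the remainder terms (equation~(5.6)) goes precisely through the relation $W^\varepsilon=\Sigma\tilde W^\varepsilon$, written out componentwise as (5.7); combined with the bounds of Propositions~\ref{New_constants} and~\ref{time_derivative_lemma} on $\|\tilde w\|_s$, $\varepsilon^3\|\tilde m\|_s$, $\varepsilon^4\|\tilde k\|_s$ (and their time-derivative analogues), these relations yield uniform bounds $\|m^\varepsilon\|_s,\|\xi^\varepsilon\|_s,\|k^{\varepsilon\,\star}\|_s,\|h^{\varepsilon\,\star}\|_s\le c$ and $\|\partial_t m^\varepsilon\|_{s-1},\ldots\le c$ with \emph{no} loss in $\varepsilon$---the $\Sigma$-weights are designed exactly so that this works. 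Your alternative ``bootstrap through the identities'' is circular without some such input, since the identities for $m^\varepsilon$ and $k^{\varepsilon\,\star}$ each contain derivatives of the other. So keep your overall argument, but for the remainder control simply invoke the $\Sigma$ relation as the paper does.
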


\begin{proof}
First of all, consider the previous bounds in (\ref{bound1_BGK}), (\ref{bound2_BGK}), (\ref{bound3_BGK}) and (\ref{bound_time_derivative}):

\begin{equation}
\label{bounds_rho}
\sup_{t \in [0, T^\star]} \frac{||\rho^\varepsilon-\bar{\rho}||_s}{\varepsilon} \le M, \,\,\,\,\, \sup_{t \in [0, T^\star]} \frac{||\partial_{t}(\rho^\varepsilon-\bar{\rho})||_{s-1}}{\varepsilon} \le M_1,
\end{equation}

\begin{equation}
\label{bounds_u}
\sup_{t \in [0, T^\star]} ||\rho^\varepsilon \textbf{u}^\varepsilon||_s \le N, \,\,\,\,\, \sup_{t \in [0, T^\star]} ||\partial_t(\rho^\varepsilon \textbf{u}^\varepsilon)||_{s-1} \le N_1,
\end{equation}
where $M, M_1, N, N_1$ are positive constants. The Lions-Aubin Lemma in \cite{Bertozzi} implies that, for $s-1<s'<s$,
$$\rho^\varepsilon \rightarrow \bar{\rho} \,\,\,\, \text{strongly} \,\,\,\, \text{in} \,\,\,\, C([0, T^\star], {H^s}'(\mathbb{T}^2)), \,\,\, $$
and there exists $\textbf{m}^\star$ such that
$$\textbf{m}^\varepsilon=\rho^\varepsilon \textbf{u}^\varepsilon \rightarrow \textbf{m}^\star  \,\,\,\, \text{strongly} \,\,\,\, \text{in} \,\,\,\, C([0, T^\star], {H^s}'(\mathbb{T}^2)).$$
Notice also that $\displaystyle \textbf{u}^\varepsilon=\frac{\textbf{m}^\varepsilon}{\rho^\varepsilon}$, where $$\displaystyle 1/\rho^\varepsilon \rightarrow  1/\bar{\rho}  \,\,\,\, \text{strongly} \,\,\,\, \text{in} \,\,\,\, C([0, T^\star], {H^s}'(\mathbb{T}^2)),$$
since we can take $\bar{\rho}$ such that $\rho^\varepsilon > \frac{\bar{\rho}}{2}$ as in (\ref{rho_rho_bar_2_BGK}). Then
$$\displaystyle \textbf{u}^\varepsilon=\frac{\textbf{m}^\varepsilon}{\rho^\varepsilon} \rightarrow \frac{\textbf{m}^\star}{\bar{\rho}}=:\textbf{u}^\star\,\,\,\, \text{strongly} \,\,\,\, \text{in} \,\,\,\, C([0, T^\star], {H^s}'(\mathbb{T}^2)).$$ 

Now, consider system (\ref{BGK_NS_new_variables_translated}) in the following formulation:
\begin{equation}
\label{BGK_un_eps}
\begin{cases}
& \partial_{t}w^{\varepsilon}+\partial_{x}m^\varepsilon+\partial_{y}\xi^\varepsilon=0; \\
& \varepsilon \partial_{t} m^\varepsilon + \frac{\lambda^{2}}{\varepsilon}\partial_{x}k^{\varepsilon}=\frac{1}{\tau}(\frac{A_{1}(w^{\varepsilon}+\bar{w})}{\varepsilon^2}-\frac{m^\varepsilon}{\varepsilon}), \\
& \varepsilon \partial_{t}\xi^\varepsilon+\frac{{\lambda^{2}}}{\varepsilon}\partial_{y}h^{\varepsilon}=\frac{1}{\tau}(\frac{A_{2}(w^{\varepsilon}+\bar{w})}{\varepsilon^2}-\frac{\xi^\varepsilon}{\varepsilon}), \\
& \varepsilon \partial_{t}k^{\varepsilon}+\varepsilon \partial_{x}m^\varepsilon=\frac{(2aw^{\varepsilon}-k^{\varepsilon})}{\tau \varepsilon}, \\
& \varepsilon \partial_{t}h^{\varepsilon}+\varepsilon \partial_{y}\xi^\varepsilon=\frac{(2aw^{\varepsilon}-h^{\varepsilon})}{\tau \varepsilon}, \\
\end{cases}.
\end{equation}

From (\ref{BGK_un_eps}) and $2a\lambda^2\tau={\nu}$ as in (\ref{a_def_BGK}), it follows that
\begin{equation*}
\begin{cases}
m^\varepsilon=\frac{A_1(w^\varepsilon+\bar{w})}{\varepsilon}-{\nu}\partial_x w^\varepsilon+\varepsilon^2\lambda^2\tau^2(\partial_{tx}k^\varepsilon+\partial_{xx}m^\varepsilon)-\varepsilon^2\tau\partial_t m^\varepsilon; \\
\xi=\frac{A_2(w^\varepsilon+\bar{w})}{\varepsilon}-{\nu}\partial_y w^\varepsilon + \varepsilon^2 \lambda^2\tau^2(\partial_{ty}h^\varepsilon+\partial_{yy} \xi^\varepsilon) - \varepsilon^2\tau \partial_t \xi^\varepsilon.\\
\end{cases}
\end{equation*}

Substituting the expansions above in the first equation of (\ref{BGK_un_eps}), we get the following equation:
\begin{equation*}
\begin{array}{c}
\partial_t w^\varepsilon +  \dfrac{\partial_x A_1(w^\varepsilon+\bar{w})}{\varepsilon}+ \dfrac{\partial_y A_2(w^\varepsilon+\bar{w})}{\varepsilon}-{\nu}\Delta w^\varepsilon \\\\

=\varepsilon^2\tau \partial_{tx}m^\varepsilon+\varepsilon^2\tau \partial_{ty}\xi^\varepsilon - \varepsilon^2 \lambda^2\tau^2 (\partial_{txx}k^\varepsilon+\partial_{xxx}m^\varepsilon+\partial_{tyy}h^\varepsilon+\partial_{yyy}\xi^\varepsilon).
\end{array}
\end{equation*}

We recall that $W^\varepsilon=\Sigma \tilde{W}^\varepsilon$ by definition (\ref{change_Sigma}), with $W^\varepsilon, \tilde{W}^\varepsilon$ in (\ref{W_variables_NS}) and (\ref{change_Sigma}) respectively. This yields:
\begin{equation}
\label{sigma}
\begin{cases}
w^\varepsilon=\tilde{w}^\varepsilon+\varepsilon^3\sigma_1 \tilde{m}^\varepsilon+\varepsilon^3 \sigma_2 \tilde{\xi}^\varepsilon + 2a\varepsilon^4 \tilde{k}^\varepsilon+2a\varepsilon^4\tilde{h}^\varepsilon; \\

\varepsilon^2 m^\varepsilon=\varepsilon \sigma_1 \tilde{w}^\varepsilon + 2 a \lambda^2 \varepsilon^4 \tilde{m}^\varepsilon+ \varepsilon^5 \sigma_1 \tilde{k}^\varepsilon; \\

\varepsilon^2 \xi^\varepsilon=\varepsilon \sigma_2 \tilde{w}^\varepsilon + 2 a \lambda^2 \varepsilon^4 \tilde{\xi}^\varepsilon + \varepsilon^5 \sigma_2 \tilde{h}^\varepsilon; \\

\varepsilon^2 k^\varepsilon=2 a \varepsilon^2 \tilde{w}^\varepsilon + \varepsilon^5 \sigma_1 \tilde{m}^\varepsilon + 2 a \varepsilon^6 \tilde{k}^\varepsilon; \\

\varepsilon^2 h^\varepsilon=2 a \varepsilon^2 \tilde{w}^\varepsilon + \varepsilon^5 \sigma_2 \tilde{\xi}^\varepsilon + 2 a \varepsilon^6 \tilde{h}^\varepsilon.
\end{cases}
\end{equation}

From (\ref{gronwall_time_derivative_BGK}), (\ref{Gronwall1_proposition_BGK})-(\ref{bound12_BGK}) and (\ref{sigma}) it follows that, for a fixed constant value $c>0$, 
\begin{equation*}
\tau\varepsilon^2 ||\partial_{tx}m^\varepsilon+\partial_{ty}\xi^\varepsilon - \lambda^2 \tau (\partial_{txx}k^\varepsilon+\partial_{xxx}m^\varepsilon+\partial_{tyy}h^\varepsilon + \partial_{yyy}\xi^\varepsilon)||_{s-3} = O(\varepsilon^2),
\end{equation*}

then

\begin{equation}
\label{convergence_BGK}
\Bigg\|\partial_t w^\varepsilon +  \frac{\partial_x A_1(w^\varepsilon+\bar{w})}{\varepsilon}+ \frac{\partial_y A_2(w^\varepsilon+\bar{w})}{\varepsilon}-{\nu}\Delta w^\varepsilon\Bigg\|_{s-3} = O(\varepsilon^2).
\end{equation}

The last two equations and the previous bounds (\ref{bounds_rho}) and (\ref{bounds_u}) yield:
\begin{equation}
\label{last_bound_BGK}
\Bigg\|\partial_{t}(\rho^\varepsilon \textbf{u}^\varepsilon)+\nabla \cdot (\rho^\varepsilon \textbf{u}^\varepsilon \otimes \textbf{u}^\varepsilon) +  \frac{\nabla(\rho^\varepsilon-\bar{\rho})}{\varepsilon^2} -{\nu}\Delta (\rho \textbf{u}^\varepsilon) \Bigg\|_{s-3}=O(\varepsilon),
\end{equation}

and, in particular, 
\begin{equation*}
\frac{||\nabla(\rho^\varepsilon-\bar{\rho})||_{s-3}}{\varepsilon^2} \le c, 
\end{equation*}
i.e. there exists $\nabla P^\star \in {L_t^\infty H_x^{s-3}}$ such that
\begin{equation*}
\frac{\nabla(\rho^\varepsilon - \bar{\rho})}{\varepsilon^2} \rightharpoonup^\star \nabla P^\star \,\,\,\,\, \text{in} \,\,\,\,\,{L_t^\infty H_x^{s-3}}.
\end{equation*}
Moreover, since $\rho^\varepsilon \rightarrow \bar{\rho} \,\,\, \text{and} \,\,\,\, \textbf{u}^\varepsilon \rightarrow \textbf{u}^\star \,\,\,\,  \text{in} \,\,\,\, C([0, T^\star], {H^s}'(\mathbb{T}^2))$, from $||\partial_t (\rho^\varepsilon \textbf{u}^\varepsilon)||_{s-1} \le N_1$ as in (\ref{bounds_u}), it follows also that 
\begin{equation*}
\partial_t(\rho^\varepsilon \textbf{u}^\varepsilon) \rightharpoonup^\star \bar{\rho}\partial_t \textbf{u}^\star \,\,\,\, \text{in} \,\,\,\, {L_t^\infty H_x^{s-3}},
\end{equation*} 
while
\begin{equation*}
\nabla \cdot (\rho^\varepsilon \textbf{u}^\varepsilon \otimes \textbf{u}^\varepsilon) \rightharpoonup^\star \bar{\rho} \nabla \cdot (\textbf{u}^\star \otimes \textbf{u}^\star) \,\,\,\, \text{in} \,\,\,\, {L_t^\infty H_x^{s-3}}.
\end{equation*}

Thus, from (\ref{last_bound_BGK}) we have the weak$^\star$ convergence in ${L_t^\infty H_x^{s-3}},$ i.e.
\begin{equation*}
\partial_t(\rho^\varepsilon \textbf{u}^\varepsilon)+\nabla \cdot (\rho^\varepsilon \textbf{u}^\varepsilon \otimes \textbf{u}^\varepsilon) + \frac{\nabla (\rho^\varepsilon-\bar{\rho})}{\varepsilon^2} - {\nu} \Delta (\rho^\varepsilon \textbf{u}^\varepsilon) \rightharpoonup^\star \bar{\rho} \Bigg(\partial_t \textbf{u}^\star + \nabla \cdot (\textbf{u}^\star \otimes \textbf{u}^\star) + \frac{\nabla P^\star}{\bar{\rho}} - {\nu}\Delta \textbf{u}^\star \Bigg).
\end{equation*}

On the other hand, the first equation of (\ref{convergence_BGK}) yields
\begin{equation}
\label{null_divergence_BGK}
\partial_t (\rho^\varepsilon-\bar{\rho}) + \nabla \cdot (\rho^\varepsilon \textbf{u}^\varepsilon)-{\nu} \Delta (\rho^\varepsilon -\bar{\rho})=O(\varepsilon^2).
\end{equation}

Notice that $||\partial_t(\rho^\varepsilon-\bar{\rho})||_{s-1} = O(\varepsilon)$ and $||\Delta (\rho^\varepsilon -\bar{\rho})||_{s-2} = O(\varepsilon)$ thanks to (\ref{bounds_rho}), while $$\rho^\varepsilon \rightarrow \bar{\rho} \quad \text{and} \quad \textbf{u}^\varepsilon \rightarrow \textbf{u}^\star \quad \text{in} \quad C([0, T^\star], {H^s}'(\mathbb{T}^2)).$$ This way, from (\ref{null_divergence_BGK}) we finally recover the divergence free condition
\begin{equation*}
\nabla \cdot \textbf{u}^\star=0.
\end{equation*}
\end{proof}

\section{Conclusions and perspectives}
\label{Conclusions}
In this paper we proved the convergence of the solutions to the vector BGK model to the solutions to the incompressible Navier-Stokes equations on the two dimensional torus $\mathbb{T}^2.$ It could be worth extending these results to the whole space and to a general bounded domain with suitable boundary conditions, but new ideas are needed to approach these cases. Rather than the more classical kinetic entropy approach, in this paper our main tool was the use of a constant right symmetrizer, which provides the conservative-dissipative form introduced in \cite{Bianchini}, and allows us to get higher order energy estimates. Another interesting problem is to estimate the rate of convergence, in terms of the difference $||\textbf{u}^\varepsilon-\textbf{u}^{NS}||_s,$ with $\textbf{u}^\varepsilon, \textbf{u}^{NS}$ the velocity fields associated with the BGK system in (\ref{q_BGK}) and the Navier-Stokes equations in (\ref{real_NS}) respectively.


\begin{thebibliography}{11}
\bibitem{Aregba1}
\textsc{D. Aregba-Driollet, R. Natalini,}
\newblock Discrete Kinetic Schemes for Multidimensional Conservation Laws,
\newblock{\em SIAM J. Num. Anal.}
\textbf{37} (2000),
1973-2004.

\bibitem{Aregba2}
\textsc{D. Aregba-Driollet, R. Natalini, S.Q. Tang,}
\newblock Diffusive kinetic explicit schemes for nonlinear degenerate parabolic systems, 
\newblock {\em Math. Comp.}
\textbf{73} (2004),
63-94.

\bibitem{Golse1}
\textsc{C. Bardos, F. Golse, C. D. Levermore,}
\newblock Fluid dynamic limits of hyperbolic equations. I. Formal derivations,
\newblock {\em J. Stat. Phys.}
\textbf{63} (1991), 
323-344.

\bibitem{Golse2}
\textsc{C. Bardos, F. Golse, C. D. Levermore,}
\newblock Fluid dynamic limits of kinetic equations - II Convergence proofs for the Boltzmann-equation,
\newblock {\em Comm. Pure Appl. Math.}
\textbf{46} (1993),
667-753.

\bibitem{Benzoni}
\textsc{S. Benzoni-Gavage,  D. Serre,} 
\newblock Multidimensional Hyperbolic Partial Differential Equations,
\newblock{\em Oxford University Press} (2007).

\bibitem{Bertozzi}
\textsc{A. Bertozzi, A. Majda,} 
\newblock Vorticity and Incompressible Flow,
\newblock{\em Cambridge University Press} (2002).
 
\bibitem{Bianchini}
\textsc{S. Bianchini, B. Hanouzet, R. Natalini,}
\newblock Asymptotic behavior of smooth solutions for partially dissipative hyperbolic systems with a convex entropy,
\newblock {\em Comm. Pure Appl. Math.} \textbf{60} (2007),
1559-1622.

\bibitem{Bouchut}
\textsc{F. Bouchut,}
\newblock {Construction of BGK Models with a Family of Kinetic Entropies for a Given System of Conservation Laws,}
\newblock {\em J. of Stat. Phys.} \textbf{95} (2003).

\bibitem{BGN}
\textsc{F. Bouchut, F. Guarguaglini, R. Natalini,}
\newblock {Diffusive BGK Approximations for Nonlinear Multidimensional Parabolic Equations,}
\newblock {\em Indiana Univ. Math. J.} \textbf{49} (2000),
723-749.

\bibitem{VBouchut}
\textsc{F. Bouchut, Y. Jobic, R. Natalini, R. Occelli, V. Pavan,}
\newblock Second-order entropy satisfying BGK-FVS schemes for incompressible Navier-Stokes equations, 
preprint June 2016, submitted.

\bibitem{Brenier}
\textsc{Y. Brenier,}
\newblock Averaged multivalued solutions for scalar conservation laws, 
\newblock{\em SIAM J. Numer. Anal.} \textbf{21} (1984),
1013-1037.

\bibitem{BNP}
\textsc{Y. Brenier, R. Natalini, M. Puel,}
\newblock On a relaxation approximation of the incompressible Navier-Stokes equations,
\newblock {\em Proc. Amer. Math. Soc.}
\textbf{132} 4 (2003), 1021-1028.

\bibitem{CN}
\textsc{M. Carfora, R. Natalini,} 
\newblock A discrete kinetic approximation for the incompressible Navier-Stokes equations, 
\newblock {\em ESAIM: Math. Modelling Numer. Anal.} \textbf{42} (2008), 
93-112.

\bibitem{Cercignani}
\textsc{C. Cercignani, R. Illner, M. Pulvirenti,}
\newblock The Mathematical Theory of Dilute Gases,
\newblock  {\em Springer-Verlag, New York} (1994).

\bibitem{Esposito}
\textsc{A. DeMasi, R. Esposito, J. Lebowitz,}
\newblock Incompressible Navier-Stokes and Euler Limits of the Boltzmann equation, 
\newblock {\em Comm. Pure Appl. Math.}
\textbf{42} (1990),
1189-1214.

\bibitem{Laure}
\textsc{F.  Golse, L. Saint-Raymond,}
\newblock The Navier–Stokes limit of the Boltzmann equation for bounded collision kernels,
\newblock {\em Invent. math.}
\textbf{155} 81 (2004).

\bibitem{Imene1}
\textsc{I. Hachicha,}
\newblock Approximations hyperboliques des équations de Navier-Stokes,
\newblock {Ph. D. Thesis, Universit\'e d'\'Evry-Val d'Essone} (2013).

\bibitem{Imene}
\textsc{I. Hachicha,}
\newblock Global existence for a damped wave equation and convergence towards a solution of the Navier-Stokes problem. \newblock {\em Nonlinear Anal.} 
\textbf{96} (2014), 68-86.

\bibitem{HN}
\textsc{B. Hanouzet, R. Natalini,} 
\newblock Global Existence of Smooth Solutions for Partially Dissipative Hyperbolic Systems with a Convex Entropy,
\newblock  {\em Arch. Rational Mech. Anal. }
{\textbf{169}} (2003), 89-117.
 
\bibitem{XinJin}
\textsc{Jin, Z. Xin,}
\newblock {The relaxation schemes for system of conservation laws in arbitrary space dimensions,}
\newblock {\em Comm. Pure Appl. Math.}
\textbf{48} (1995),
235-277.
 
\bibitem{Yong1}
\textsc{M. Junk, W.-A- Yong,}
\newblock Rigorous Navier-Stokes Limit of the Lattice Boltzmann Equation,
\newblock {\em  Asymptotic Anal.} 
{\textbf{35}} 165 (2003).

\bibitem{Lattanzio}
\textsc{C. Lattanzio, R. Natalini,}
\newblock Convergence of diffusive BGK approximations for nonlinear strongly parabolic systems,
\newblock {\em Proc. Roy. Soc. Edinburgh Sect. A}
\textbf{132} (2002) n. 2, 341-358.

\bibitem{Toscani}
\textsc{P. L. Lions, G. Toscani,}
\newblock  Diffusive limits for finite velocity Boltzmann kinetic models, 
\newblock{\em Revista Mat. Iberoamer.}
\textbf{13} (1997), 473-513.

\bibitem{Majda}
\textsc{A. Majda,} 
\newblock Compressible Fluid Flow and Systems of Conservation Laws in Several
Space Variables, 
\newblock {\em Springer-Verlag, New York} (1984).

\bibitem{Natalini}
\textsc{R. Natalini,}
\newblock A discrete kinetic approximation of entropy solutions to multidimensional scalar conservation laws,
\newblock{\em J. Diff. Eq.} \textbf{148} (1998),
292-317.

\bibitem{Raugel}
\textsc{M. Paicu and G. Raugel,}
\newblock A hyperbolic perturbation of the Navier-Stokes equations,
(Une perturbation hyperbolique des \' equations de Navier-Stokes.). \newblock {\em ESAIM, Proc.}
\textbf{21}
(2007), 65-87.

\bibitem{Perthame1}
\textsc{B. Perthame,} 
\newblock Kinetic formulation of conservation laws,
\newblock {\em Oxford Lecture Series in Mathematics and its Applications} \textbf{21},
\newblock {\em Oxford University Press} (2000).

\bibitem{Succi}
\textsc{S. Succi,}
\newblock The lattice Boltzmann equation for fluid dynamics and beyond, 
\newblock{\em Numerical Mathematics and Scientific Computation, Oxford Science
Publications, the Clarendon Press, Oxford University Press, New York} (2001).

\bibitem{Wolf}
\textsc{D. A. Wolf-Gladrow,}
\newblock Lattice-gas cellular automata and Lattice Boltzmann models. An introduction, Lecture Notes in Mathematics, 
\newblock {\em Springer-Verlag, Berlin} (2000).
\end{thebibliography}
\end{document}